\documentclass{amsart}
\usepackage[utf8]{inputenc}
\usepackage{graphicx}
\usepackage{array}
\usepackage{tabularx}
\usepackage{amssymb}
\usepackage{tikz}
\usepackage{comment}
\usepackage{tikz-cd}
\usetikzlibrary{graphs}
\usepackage{blindtext}
\usepackage{hyperref}\hypersetup{colorlinks,allcolors=black}
\usepackage{varwidth}
\usepackage{import}
\usepackage{float}
\usepackage{amsmath}
\usepackage{amsthm}
\usepackage{calc}
 
\usepackage{caption}
\usepackage{subcaption}
\usepackage{wrapfig}
\newcommand{\handle}[1]{$#1$\text{-handle}}
\newcommand{\handles}[1]{$#1$\text{-handles}}
\newcommand{\Kd}{Kirby diagram\;}
\newcommand{\Kds}{Kirby diagrams\;}
\newcommand{\Hd}{Heegaard diagram\;}
\newcommand{\Hds}{Heegaard diagrams\;}

\makeatletter
\newtheorem*{rep@algorithm}{\rep@title}
\newcommand{\newrepalgorithm}[2]{%
\newenvironment{rep#1}[1]{%
 \def\rep@title{#2 \ref{##1}}%
 \begin{rep@algorithm}}%
 {\end{rep@algorithm}}}
\makeatother
\makeatletter
\newtheorem*{rep@theorem}{\rep@title}
\newcommand{\newreptheorem}[2]{%
\newenvironment{rep#1}[1]{%
 \def\rep@title{#2 \ref{##1}}%
 \begin{rep@theorem}}%
 {\end{rep@theorem}}}
\makeatother
\theoremstyle{definition}
\newtheorem{definition}{Definition}[section]
\newtheorem{theorem}[definition]{Theorem}

\newtheorem{lemma}[definition]{Lemma}
\newtheorem{corollary}[definition]{Corollary}
\newtheorem{proposition}[definition]{Proposition}
\newtheorem{remark}[definition]{Remark}
\newtheorem{example}[definition]{Example}
\newtheorem{algorithm}[definition]{Algorithm}
\newreptheorem{theorem}{Theorem}
\newrepalgorithm{algorithm}{Algorithm}

\title{Kirby diagrams of 4-dimensional open books}
\author{Chun-Sheng Hsueh}
\address{Humboldt Universit\"at zu Berlin, Germany}
\email{chun-sheng.hsueh@hu-berlin.de}
\begin{document}

\begin{abstract}
   We provide an algorithm for constructing a \Kd of a $4$-dimensional open book given a \Hd of the page. As an application, we show that any open book with trivial monodromy is diffeomorphic to an open book constructed with a punctured handlebody as page and a composition of torus twists and sphere twists as monodromy.
\end{abstract}
\maketitle

\section{Introduction}\label{intro}
Obstructions to the existence of open books are found in all dimensions and are known to be complete in all dimensions except $4$~\cite{Quinn}. Kirby calculus is a successful approach to studying $4$-dimensional manifolds~\cite{gs}, which we would like to apply to study open books in dimension $4$. 

We introduce the notion of a \textit{half open book}, a generalization of a “lens thickening”~\cite{Baader}, whose \Kd is well known. Our Algorithm~\ref{algorithm3} for constructing a \Kd of an open book simply involves adding a framed link to a \Kd of a half open book, which corresponds to gluing two half open books together. This will quickly yield Theorem~\ref{not unique}, where we show that an open book constructed with an arbitrary page and a trivial monodromy can be alternatively constructed using a simple page, namely a punctured handlebody, and a simple but nontrivial monodromy composed of sphere twists and torus twists. A handlebody is a $3$-manifold with boundary obtained from $D^3$ by attaching \handles{1}, and by a punctured handlebody we mean a handlebody with open $3$-balls removed from its interior. Finally, we prove that the diffeomorphism type of the spin of a lens space $\operatorname{L}(p,q)$ is independent of $q$, a result due to~\cite{Meier,Pao,Plotnick}, using Kirby calculus. Section~\ref{examples} is dedicated to examples that effectively capture the main ideas.

We start by recalling the definitions necessary to state our main results. Given a compact oriented $3$-manifold $M$ with non-empty boundary called a \textit{page} and a self-diffeomorphism $\varphi\colon M\rightarrow M$ that restricts to the identity map in a neighborhood of $\partial M$ called the \textit{monodromy}, the \textit{open book} $\operatorname{Ob}(M,\varphi)$ is diffeomorphic to the closed $4$-manifold $M\times [0,1]/\sim_\varphi$, where the equivalence relation is given by 
$$(x,1)\sim_\varphi(\varphi(x),0)\text{ for all } x\in M,\text{ and}$$ $$(x,t)\sim_\varphi(x,t')\text{ for all } x\in \partial M, t,t'\in [0,1].$$ 

\begin{reptheorem}{not unique}
    Given any compact oriented $3$-manifold $M$ with non-empty boundary, there exists a monodromy $\varphi$ on a punctured handlebody $H$ such that $\operatorname{Ob}(H,\varphi)$ is diffeomorphic to $\operatorname{Ob}(M,\operatorname{id})$.
\end{reptheorem}

\begin{figure}[ht]
    \centering
    \includegraphics[width=\textwidth]{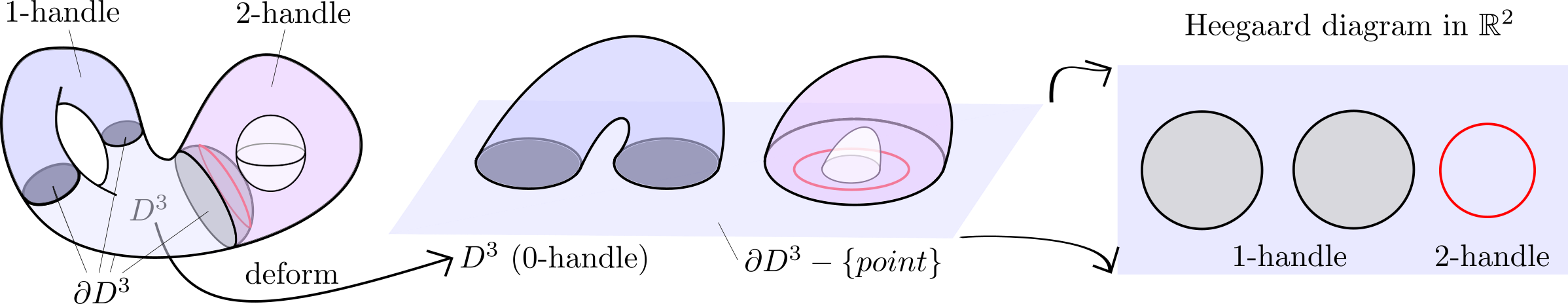}
    \caption{Handle decomposition to \Hd (of the punctured solid torus).}
    \label{fig:hd of punctured solid torus}
\end{figure}
A \textit{\Hd} is a way of presenting a $3$-manifold with non-empty boundary in $\mathbb{R}^2$. We think of $\mathbb{R}^2$ as the boundary of the \handle{0} $\partial D^3=S^2$ with a single point removed to which we attach 1- and \handles{2}. The attaching region of a \handle{1} is indicated with a pair of $D^2$'s and the attaching sphere of a \handle{2} is represented by a simple closed curve. Similarly, one can regard $\mathbb{R}^3$ as the boundary of the $4$-dimensional \handle{0} with a point removed. In the $4$-dimensional case, the attaching region of a \handle{1} is specified by a pair of $D^3$'s. In addition to marking the attaching sphere, the framing of a \handle{2} also needs to be specified. A parallel curve of the attaching sphere of a \handle{2} can be used to encode a trivialization of its tubular neighborhood in $\mathbb{R}^3$. As a consequence of~\cite{Laudenbach}, there is essentially a unique way to obtain a closed $4$-manifold by attaching 3- and \handles{4}. Hence such a diagram, called a \textit{Kirby diagram}, defines a closed $4$-manifold up to diffeomorphism. A closed $4$-manifold has no unique Kirby diagram, handle pair creation/cancellation, and handle sliding form a complete set of moves that allows one to go from one diagram to another of the same manifold~\cite{Cerf}. For an exposition of \Kds and \Hds the reader is referred to the book~\cite{gs} by Gompf and Stipsicz.

\begin{repalgorithm}{algorithm3}
    Given a \Hd of $M$ and the image under $\varphi$ of each \handle{2} in $M$, one obtains a \Kd of the open book $\operatorname{Ob}(M,\varphi)$ by applying the following algorithm:
    \begin{enumerate}
        \item Replace every \handle{1} attaching region with a pair of $D^3$'s.
        \item\label{2} Add blackboard framing to every \handle{2} attaching sphere.
        \item Add behind the blackboard plane a half-meridian with blackboard framing to each \handle{2} attaching sphere. 
        \item Add in front of the blackboard plane a half-meridian with blackboard framing to each \handle{2} attaching sphere, then replace them with their images under the monodromy $\varphi$.
    \end{enumerate}
\end{repalgorithm}

The result of step~\ref{2} is a \Kd of a half open book, see Algorithm~\ref{algorithm2}. To place the image under $\varphi$ of the \handles{2} into the \Kd of the half open book, we identify each half-space in $\mathbb{R}^3$ separated by the blackboard with the page manifold, which will be explained in Proposition~\ref{cocore is half-meridian}.

\begin{figure}[ht]
    \centering
    \includegraphics[width=0.95\textwidth]{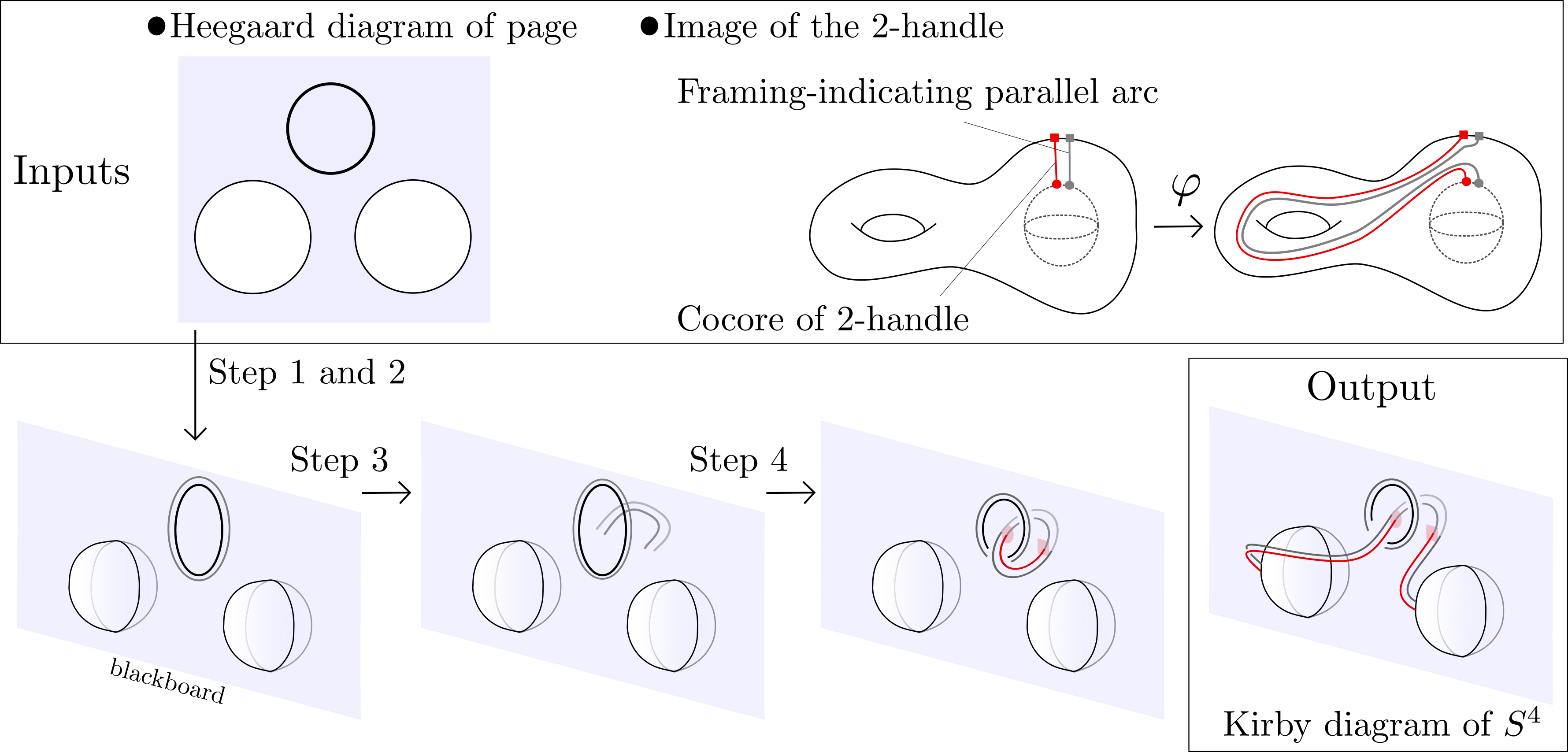}
    \caption{Algorithm~\ref{algorithm3}.}
   \label{fig:first example}
\end{figure}

In Figure~\ref{fig:first example} the \Kd obtained using Algorithm~\ref{algorithm3}, with page punctured solid torus and monodromy torus twist (Defintion~\ref{tt}) as input, consists of a \handle{1} whose belt sphere intersects the attaching sphere of a \handle{2} and another \handle{2} whose belt sphere intersects the attaching sphere of a \handle{3}. Handle cancellation results in a blank diagram, a \Kd of $S^4$, where a single \handle{4} is glued to the \handle{0}. This implies that $\operatorname{Ob}(M',\varphi')$ is diffeomorphic to $\operatorname{Ob}(M,\varphi)\#S^4$, where $M'$ is obtained from $M$ by taking a connected sum with a punctured solid torus and $\varphi'$ is the composition of $\varphi$ with a trivial extension of a torus twist. In particular, Figure~\ref{fig:first example} exhibits a $4$-dimensional open book stabilization as mentioned in~\cite{kegel2023trisecting}.

\Kds of spun $4$-manifolds, which are open books with binding $S^2$, are constructed in~\cite{Montesinos}. An algorithm to obtain a trisection from a Kirby diagram is presented in~\cite{Kepplinger_2022}. For a practical algorithm obtaining a trisection diagram from a $4$-dimensional open book see~\cite{kegel2023trisecting}.

\subsection*{Convention}
Throughout this paper, all maps are smooth and all manifolds are compact, oriented, connected, and smooth. There is a preferred way to smooth corners~\cite[Remark~1.3.3]{gs}, hence we do distinguish between manifolds with and without corners. Concerning handle decomposition of $3$-manifolds with non-empty boundary, we always assume that there is a unique \handle{0} and no \handles{3}~\cite[p.~112]{gs}, and that handles are attached in order of non-decreasing indices to the boundary of handles of lower indices.

\subsection*{Acknowledgment}
I am indebted to Marc Kegel, Felix Schm\"aschke, and Thomas Vogel for helpful conversations. I would like to express my gratitude toward my advisor Marc Kegel for suggesting this topic to me and for his valuable mentoring which helped me to thrive for improvement. I am also grateful to Thomas Vogel for his encouragement. Finally, I would like to warmly thank Li-Chun Wang and give her credit for the 3D graphics in Figure~\ref{fig:hd example dehn}.

\section{Background}\label{background}
An open book is a geometric structure that arises naturally from many fields of mathematics. The application of open books includes the study of fibered knots~\cite{3ob}, zero sets of complex polynomials~\cite{milnor2016singular}, spun embeddings~\cite{pancholi2018embeddings}, and contact structures~\cite{Giroux}, just to name a few.

An \textit{open book decomposition} $(B,\pi)$ of a closed $n$-manifold $X$ consists of a binding $B$ and a fibration $\pi\colon X\backslash B\rightarrow S^1$. The \textit{binding} is a non-empty, not necessarily connected, codimension two submanifold of $X$ with trivial normal bundle. On a neighborhood $N(B)\subset X$ of $B$ there exists a trivialization $N(B)\approx B\times D^2$ using polar coordinates $(r,\phi)$ on $D^2$, such that $\pi|_{N(B)\backslash B}=\phi$. For each $t\in S^1$, the preimage $\pi^{-1}(t)$ is called a \textit{fiber} and its compactification $\overline{\pi^{-1}(t)}$ is called a \textit{page} of the open book.

An \textit{abstract open book decomposition} $(M,\varphi)$ of a closed $n$-manifold $X$ consists of a page $M$ and a monodromy $\varphi$.
The \textit{page} is a compact $(n-1)$-manifold $M$ with non-empty boundary and the \textit{monodromy} is an orientation-preserving self-diffeomorphism on $M$ that equals the identity map in a neighborhood of the boundary $\partial M$. $X$ is diffeomorphic to the closed $n$-manifold $$M\times [0,1]/\sim_\varphi,$$ denoted by $\operatorname{Ob}(M,\varphi)$, where the equivalence relation is given by $$(x,1)\sim_\varphi(\varphi(x),0)\text{ for all } x\in M,\text{ and}$$ $$(x,t)\sim_\varphi(x,t')\text{ for all } x\in \partial M, t,t'\in [0,1].$$

An abstract open book decomposition $(M,\varphi)$ gives rise to an open book decomposition whose binding is $\partial M\times [0,\frac{1}{2}]/\sim_{\varphi}$ and whose pages are $M\times\{t\}$. Since \Kds are defined up to diffeomorphism, we do not need to distinguish between abstract and non-abstract open book decompositions. More details can be found in~\cite{Colin} and~\cite[Remark~2.6]{Etnyre}.

In dimension $3$ every closed manifold has open book decompositions~\cite{3ob}. All closed, simply-connected manifolds of dimension $\geq 6$ with vanishing signature admit open book decompositions~\cite{Winkelnkemper}, and all closed manifolds of odd dimension $\geq 7$ admit open book decompositions~\cite{Lawson}. Quinn proved the state-of-the-art theorem of open books showing that an $n$-manifold admits an open book decomposition if and only if its signature and its Quinn-invariant vanish, where $n\neq 4$~\cite{Quinn}. The Quinn-invariant, also called the asymmetric signature, of a simply connected manifold vanishes if and only if the usual signature does. It is extracted from a version of a middle-dimensional intersection form with group ring coefficients and takes values in a certain Witt group of such forms. This obstruction class vanishes if and only if the corresponding form admits stably a Lagrangian. For further details on this, we refer to the original works~\cite{Quinn,Ranicki}. 

It remains unknown whether these obstructions are complete for the case $n=4$. Thus we would like to initiate a study of $4$-dimensional open books using a tool exclusive to dimension $4$ -- Kirby diagrams.

\section{Kirby diagrams of half open books}
An open book is given by two copies of \textit{half open books} glued together. Thus, a \Kd of an open book with page $M$ can be obtained from a \Kd of a half open book, with the same page, by adding a framed link. The main goal of this section is to present Algorithm~\ref{algorithm1} for constructing a \Kd of half open books. We begin by introducing our non-standard definitions.

Let $M$ be a compact $(n-1)$-manifold with non-empty boundary.
\begin{definition}
    The \textit{half open book with page} $M$ is the $n$-manifold $M\times [0,\frac{1}{2}]/\sim_\frac{1}{2}$, where the equivalence relation is given by $$(x,t)\sim_\frac{1}{2}(x,t') \text{ for all } x\in \partial M, t,t'\in [0,1/2].$$ Call $M\times \{0\}$ and $M\times \{\frac{1}{2}\}\subset \partial(M\times [0,\frac{1}{2}]/\sim_\frac{1}{2})$ the \textit{front} and \textit{back cover} of the half open book, respectively. The codimension two submanifold $\partial M$ of the half open book is called the \textit{binding}.
\end{definition}
The \textit{double} of an $n$-manifold $N$ with non-empty boundary, denoted by $DN$, is an $n$-manifold without boundary obtained from $N$ by gluing $-N$ to it along the boundary using the identity map on $\partial N$. The boundary of the half open book with page $M$, which consists of the front and back cover glued along the binding, is diffeomorphic to $DM$.

\begin{definition}\label{to open hob}
    Given a monodromy $\varphi\colon M\rightarrow M$, we \textit{glue} two half open books with page $M$ \textit{with} $\varphi$ as follows: \begin{itemize}
        \item glue the front cover of the second copy to the back cover of the first copy using the identity map $\operatorname{id}$ and
        \item glue the back cover of the second copy to the front cover of the first copy using the monodromy map $\varphi$.
    \end{itemize} Let $D_{\varphi}(M\times[0,\frac{1}{2}]/\sim_{\frac{1}{2}})$ denote the result of gluing two half open books with page $M$ with $\varphi$.
\end{definition}

\begin{remark}\label{open with id}
    Definition~\ref{to open hob} is well-defined since a monodromy map is equal to the identity map in a neighborhood of $\partial M$. The result of gluing two half open books $D_{\operatorname{id}}(M\times[0,\frac{1}{2}]/\sim_{\frac{1}{2}})$ with the identity map is the double $D(M\times[0,\frac{1}{2}]/\sim_{\frac{1}{2}})$ of the half open book. This explains our choice of notation.
\end{remark}

\begin{figure}[ht]
    \centering
    \includegraphics[width=0.9\textwidth]{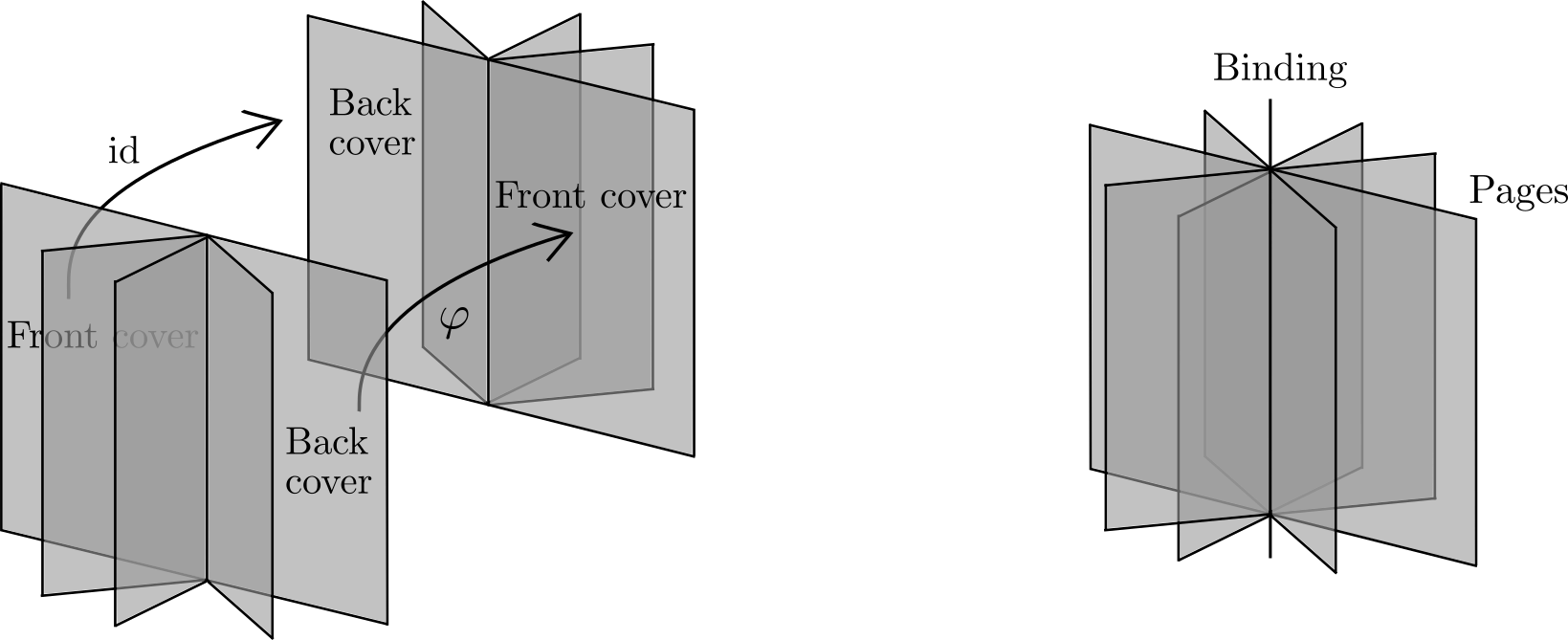}
    \caption{Gluing two half open books gives an open book.}
    
\end{figure}
\begin{proposition}\label{prop}
    The result of gluing two half open books is an open book, more explicitly $D_{\varphi}(M\times [0,\frac{1}{2}]/\sim_\frac{1}{2})$ is diffeomorphic to $\operatorname{Ob}(M,\varphi)$.
\end{proposition}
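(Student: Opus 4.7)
The plan is to recognize both sides as the same quotient of $M\times[0,1]$. First I would split the cylinder along $M\times\{1/2\}$ into two pieces $P_1:=M\times[0,1/2]$ and $P_2:=M\times[1/2,1]$, reparametrizing the second by $(x,s)\mapsto(x,s+1/2)$ so that each $P_i$ is canonically identified with $M\times[0,1/2]$. Since $\varphi$ restricts to the identity near $\partial M$, the relation $\sim_\varphi$ on $M\times[0,1]$ restricts on each $P_i$ separately to precisely $\sim_{1/2}$. Thus, under the natural quotient map $q\colon M\times[0,1]\to\operatorname{Ob}(M,\varphi)$, the image $q(P_i)$ is canonically diffeomorphic to the half open book $H_i=M\times[0,1/2]/\sim_{1/2}$, and $\operatorname{Ob}(M,\varphi)=q(P_1)\cup q(P_2)$.

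Next I would identify the gluings. The common slice $M\times\{1/2\}$ gives $q(P_1)\cap q(P_2)$; in terms of the half open book structures, this slice is the back cover $B_1$ of $H_1$ and the front cover $F_2$ of $H_2$, and the identification between them induced by lying inside a single cylinder is the identity on $M$. This matches the first gluing of Definition~\ref{to open hob}. For the remaining boundary pieces $F_1=q(M\times\{0\})$ and $B_2=q(M\times\{1\})$, the relation $(x,1)\sim_\varphi(\varphi(x),0)$ exactly identifies $B_2$ with $F_1$ via $\varphi$, which is the second prescribed gluing. Hence the universal property yields a canonical bijection $\Phi\colon D_\varphi(H)\to\operatorname{Ob}(M,\varphi)$.

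It remains to check that $\Phi$ is a diffeomorphism. Away from the binding this is immediate, since on the interior of each $P_i$ the quotient map is a local diffeomorphism and the gluing maps are smooth. The main obstacle, as usual, is near the binding: in $\operatorname{Ob}(M,\varphi)$ the binding $\partial M$ has a tubular neighborhood of the form $\partial M\times D^2$, while in each $H_i$ it has a neighborhood of the form $\partial M\times D^2_+$, a half-disk. I would handle this by checking that under $\Phi$ the two half-disk neighborhoods coming from $H_1$ and $H_2$ fit together into a full disk neighborhood, using the standard corner-smoothing convention of Remark~1.3.3 of~\cite{gs}. Because $\varphi$ is the identity on a neighborhood of $\partial M$, the gluing map on the binding collar is the identity in both $\partial M$ and $D^2$ coordinates, so the smooth structures match on overlap and $\Phi$ is smooth with smooth inverse.

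In summary, the proof reduces to (i) a set-theoretic matching of quotient relations, which is automatic from the splitting of $[0,1]$ at $1/2$, and (ii) a local smoothness check near the binding, which follows from the hypothesis that $\varphi$ is the identity on a neighborhood of $\partial M$.
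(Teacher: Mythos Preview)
Your proposal is correct and follows essentially the same route as the paper: split $M\times[0,1]$ at $t=\tfrac12$, identify each piece with a half open book, and match the two gluings with the defining relations of $\operatorname{Ob}(M,\varphi)$. The paper's proof is a one-paragraph ``follows by definition'' version of this; your additional care about smoothness near the binding (using that $\varphi=\operatorname{id}$ on a collar and the corner-smoothing convention) is a welcome elaboration but not a different argument.
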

\begin{proof}
    The proof follows by definition. Glue to the back cover of $M\times [0,\frac{1}{2}]/\sim_\frac{1}{2}$ the front cover of $M\times [\frac{1}{2},1]/\sim_\frac{1}{2}$ with the identity map, we get $M\times [0,1]/\sim$, where $(x,t)\sim(x,t')\text{ for all } x\in \partial M, t,t'\in [0,1]$. Gluing the back cover of $M\times [\frac{1}{2},1]/\sim_\frac{1}{2}$ to the front cover of $M\times [0,\frac{1}{2}]/\sim_\frac{1}{2}$ using $\varphi$ is equivalent to identifying $(x,1)$ with $(\varphi(x),0)$ for all $(x,1)\in M\times [0,1]/\sim$, which gives $\operatorname{Ob}(M,\varphi)$.
\end{proof}
\begin{example}
    The half open book with page $[0,1]$ is given by identifying $(0,0)\in [0,1]\times [0,\frac{1}{2}]$ with $(0,t)$ for all $t\in[0,\frac{1}{2}]$ and identifying $(1,0)$ with $(1,t)$ for all $t\in[0,\frac{1}{2}]$. It is diffeomorphic to $D^2$ and its boundary, the double of $[0,1]$, is diffeomorphic to $S^1$. By Proposition~\ref{prop}, the open book $\operatorname{Ob}([0,1],\operatorname{id})$ is diffeomorphic to $D(D^2)=S^2$.
\end{example}

\begin{proposition}\label{induced handle decomposition}
    A handle decomposition of an $(n-1)$-dimensional manifold $M$ induces a handle decomposition of the $n$-dimensional half open book $M\times [0,\frac{1}{2}]/\sim_\frac{1}{2}$.
\end{proposition}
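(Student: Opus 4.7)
The plan is to induct on the number of handles in a fixed handle decomposition of $M$, showing that each $(n-1)$-dimensional \handle{k} of $M$ contributes an $n$-dimensional \handle{k} to the half open book $H(M) := M\times[0,1/2]/\sim_{1/2}$, with a compatible attaching map.

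For the base case, $M = D^{n-1}$ is a single \handle{0}. The quotient $D^{n-1}\times[0,1/2]/\sim_{1/2}$ collapses $S^{n-2}\times[0,1/2]$ onto $S^{n-2}\cong\partial D^{n-1}$, and after smoothing corners the factors $D^{n-1}\times\{0\}$ and $D^{n-1}\times\{1/2\}$ become the closed hemispheres of $\partial D^n = S^{n-1}$ meeting along the equator. Thus $H(D^{n-1})\cong D^n$, an $n$-dimensional \handle{0}.

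For the inductive step, suppose $M' = M\cup_\phi h^k$ with $h^k = D^k\times D^{n-1-k}$ attached along $\phi:S^{k-1}\times D^{n-1-k}\to\partial M$, and $H(M)$ already carries the induced decomposition. Set $\widetilde{h^k} := h^k\times[0,1/2]$; the identification $D^{n-1-k}\times[0,1/2]\cong D^{n-k}$ (after smoothing corners) realizes $\widetilde{h^k}\cong D^k\times D^{n-k}$, an $n$-dimensional \handle{k}. The attaching map $\widetilde\phi:S^{k-1}\times D^{n-k}\to\partial H(M) = DM$ is built from $\phi$ by fixing a collar $\partial M\times[-1,1]\subset DM$ whose $[-1,0]$ and $[0,1]$ sides sit inside the front and back covers respectively, identifying $D^{n-k}$ with $D^{n-1-k}\times[-1,1]$, and setting $\widetilde\phi(s,y,t) := (\phi(s,y),t)$. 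To verify $H(M')\cong H(M)\cup_{\widetilde\phi}\widetilde{h^k}$, write $M'\times[0,1/2]$ as the pushout of $M\times[0,1/2]$ and $h^k\times[0,1/2]$ along $\mathrm{im}(\phi)\times[0,1/2]$; applying $\sim_{1/2}$ collapses only $\partial M'\times[0,1/2]$, so the strip $\mathrm{im}(\phi)\times[0,1/2]$, which is now interior to $M'$, survives the quotient and is precisely the collar region along which $\widetilde{h^k}$ is glued to $H(M)$.

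The hard part will be tracking how the quotient $\sim_{1/2}$ changes when passing from $M$ to $M'$, since the region where the new handle attaches is collapsed in $H(M)$ but must be un-collapsed in $H(M')$. Aligning the collar $\partial M\times[-1,1]\subset DM$ with the factor $D^{n-1-k}\times[-1,1]$ inside $\widetilde{h^k}$, and the associated corner-smoothing, requires care but is routine given the smoothing conventions fixed in the paper's introduction.
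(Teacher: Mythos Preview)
Your approach is correct and close in spirit to the paper's, but the organization differs. The paper separates the argument into two stages: first it shows that the handle decomposition of $M$ induces one on the product $M\times[0,\frac12]$ (trivially, by crossing each attaching map $\phi_i$ with $\mathrm{id}_{[0,1/2]}$), and only then does it check that the single quotient $\sim_{\frac12}$ preserves this decomposition handle-by-handle, the key computation being that $(D^{(n-1)-k}\times[0,\frac12])/\!\sim_{\frac12}\cong D^{n-k}$. You instead induct directly on $H(M)$, attaching each new $n$-dimensional $k$-handle to $\partial H(M_{i-1})=DM_{i-1}$ via a collar of the binding. The geometric content is the same---your base case is exactly the paper's key lemma---but the paper's two-stage structure sidesteps the ``un-collapsing'' difficulty you identify as the hard part: because the quotient is applied only once to the final product $M\times[0,\frac12]$ rather than repeatedly to each intermediate $M_{i-1}\times[0,\frac12]$, there is never a mismatch between $\sim_{\frac12}$ for $M_{i-1}$ and $\sim_{\frac12}$ for $M_i$ to reconcile. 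Your route is perfectly valid, but the paper's ordering makes the bookkeeping lighter.
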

\begin{proof}
    Firstly, we construct a handle decomposition of $M\times [0,\frac{1}{2}]$ from a handle decomposition of $M$. Suppose $M$ is diffeomorphic to the result $$D^{n-1}\cup_{\phi_1} h_1 \dots\cup_{\phi_m} h_m$$ of attaching the handle $h_i$ via the attaching map $\phi_i$ to $\partial D^{n-1}$ for $i=1,\dots,m$. Here the subscript $i$ is not the index of the handle but an enumeration. To begin with, the $(n-1)$-dimensional \handle{0} $D^{n-1}$ of $M$ induces an $n$-dimensional \handle{0} of $M\times[0,\frac{1}{2}]$. Now assume that the first $i$ handles of $M$ have induced $i$-many handles of $M\times[0,\frac{1}{2}]$, i.e.\ assume we have obtained a handle decomposition of $M_{i-1}\times [0,\frac{1}{2}]$, where $M_{i-1}=D^{n-1}\cup_{\varphi_1}h_1\dots\cup_{\phi_{i-1}}h_{i-1}$ and $0<i-1<m$. Let $\phi_i\colon \partial D^k\times D^{(n-1)-k}\rightarrow \partial M_{i-1}$ be the attaching map of a $(n-1)$-dimensional \handle{k} $h_i$. Then it induces an $n$-dimensional \handle{k} $h_i'$ of $M\times [0,\frac{1}{2}]$ with attaching map $$\phi_i'\colon\partial D^k\times D^{(n-1)-k}\times [0,1/2]\rightarrow \partial(M_{i-1}\times [0,1/2])$$ given componentwise by $$ (\phi_i,\operatorname{id}_{[0,\frac{1}{2}]})\colon\partial D^k\times D^{(n-1)-k}\times [0,1/2]\rightarrow\partial M_{i-1}\times [0,1/2]\subset \partial\left(M_{i-1}\times [0,1/2]\right),$$ such that $M_i\times[0,\frac{1}{2}]$ is diffeomorphic to $(M_{i-1}\times [0,\frac{1}{2}]) \cup_{\phi_i'}h_i'$. Note that $D^{(n-1)-k}\times [0,\frac{1}{2}]$ is diffeomorphic to $D^{n-k}$. By induction, we obtain a handle decomposition of $M\times[0,\frac{1}{2}]$.

    Secondly, we show that $\sim_{\frac{1}{2}}$ preserves the handle decomposition, i.e.\ the handle decomposition on $M\times [0,\frac{1}{2}]$ induces a handle decomposition on $M\times [0,\frac{1}{2}]/\sim_\frac{1}{2}$. Let us look closely at some \handle{k} $h_i'$ in $M\times [0,\frac{1}{2}]$. If $\phi_i'(\partial D^k \times D^{(n-1)-k}\times [0,\frac{1}{2}])\cap \partial (M\times [0,\frac{1}{2}])=\emptyset$, then no identification will take place on this handle. Then by using the identification $D^{(n-1)-k}\times [0,\frac{1}{2}]\approx D^{n-k}$, $\phi_i'$ can be seen as an attaching map of an induced $n$-dimensional \handle{k} in $(M\times [0,\frac{1}{2}])/\sim_\frac{1}{2}$ and we are done. The points $\phi_i'(\partial D^k \times \operatorname{int}(D^{(n-1)-k})\times [0,\frac{1}{2}])\subset \operatorname{int}(M)\times [0,\frac{1}{2}]$ always remain distinct points in the quotient of $\sim_{\frac{1}{2}}$. Hence if $\phi_i'((\partial D^k)\times D^{(n-1)-k}\times [0,\frac{1}{2}])\cap \partial (M\times [0,\frac{1}{2}]) \neq\emptyset$, then $\phi_i'^{-1}(\partial (M\times [0,\frac{1}{2}]))\subseteq\partial D^k \times \partial D^{(n-1)-k} \times [0,\frac{1}{2}]$ and points $(p,t)\in \phi_i'(\partial D^k \times \partial D^{(n-1)-k} \times [0,\frac{1}{2}])=\phi_i(\partial D^k \times \partial D^{(n-1)-k})\times [0,\frac{1}{2}]$ get identified for all $t\in [0,\frac{1}{2}]$. We can identify these points in the preimage already, thus identify $(x,y,t)\in \partial D^k \times \partial D^{(n-1)-k} \times [0,\frac{1}{2}]$ for all $t\in [0,\frac{1}{2}]$ and abusively denote the quotient space by $$\left(\partial D^k \times \partial D^{(n-1)-k} \times [0,1/2]\right)/\sim_{\frac{1}{2}}.$$ Let $\phi_i''$ denote the map induced by $\phi_i'$, then $$\phi_i''\left((\partial D^k \times \partial D^{(n-1)-k} \times [0,1/2])/\sim_{\frac{1}{2}}\right)$$$$=\left(\phi_i'(\partial D^k \times \partial D^{(n-1)-k} \times [0,1/2])\right)/\sim_{\frac{1}{2}}.$$ This implies \begin{multline*} \phi_i''\left((\partial D^k \times D^{(n-1)-k}\times [0,1/2])/\sim_{\frac{1}{2}}\right)=\left(\phi_i'(\partial D^k \times D^{(n-1)-k}\times [0,1/2])\right)/\sim_{\frac{1}{2}}\\ \subset \partial(M_{i-1}\times[0,1/2])/\sim_{\frac{1}{2}}.
    \end{multline*} To see that $\phi_i''$ is an $n$-dimensional \handle{k} in $(M\times[0,\frac{1}{2}])/\sim_{\frac{1}{2}}$, it remains to show $\left(\partial D^k \times D^{(n-1)-k}\times [0,\frac{1}{2}]\right)/\sim_{\frac{1}{2}}$ is diffeomorphic to $\partial D^k \times D^{n-k}$.

    It suffices to observe that $\left(D^{(n-1)-k}\times [0,\frac{1}{2}]\right)/\sim_{\frac{1}{2}}$ is diffeomorphic to $D^{n-k}$, where $(y,t)\sim_{\frac{1}{2}}(y,t')$ for all $y\in \partial D^{(n-1)-k}$, $t$, $t'\in [0,\frac{1}{2}]$. Map the equivalence class $[D^{(n-1)-k}\times \{0\}]$ to the upper hemisphere of $D^{n-k}$, the equivalence class $[D^{(n-1)-k}\times \{\frac{1}{4}\}]$ to the disk bounded by the equator, the equivalence class $[D^{(n-1)-k}\times \{\frac{1}{2}\}]$ to the lower hemisphere, and everything in between correspondingly.
\end{proof}

\begin{figure}[ht]
    \centering
    \begin{tikzpicture}
    \node[] (a) at (3.5,1.1) {attaching region};
    \node[] (b) at (2.3,0.3){};
    \node[] (c) at (0.4,0.3){};
    \node[] (d) at (1,-0.7) {boundary};
    \node[] (e) at (1.4,0){};
    \node[] (f) at (1.4,0.6){};
    \node[red] at (0.14,0.5){$\phi$};
    \draw[blue, thick]  (0.5,0.6) -- (2.5,0.6) ;
    \draw[black, thick] (2,0) -- (0,0);
    \draw[red, thick](0,0) -- (0.5,0.6);
    \draw[red, thick](2.5,0.6) -- (2,0);
    \graph{(a)->[bend left](b)};
    \graph{(a)->[bend right](c)};
    \graph{(d)->[bend right](e)};
    \graph{(d)->[bend left](f)};
    \fill [fill=red] (5,0.7) -- (5.5,1.3) --(5.5,0.3)--(5,-0.3) ;
    \draw [black,densely dotted] (5,0.7) -- (5.5,1.3) --(5.5,0.3)--
    (5,-0.3) --cycle;
    \draw[blue, thick]  (5.5,0.3) -- (7.5,0.3) ;
    \draw[black, thick] (7,-0.3) -- (5,-0.3);
            
    \draw[blue, thick]  (5.5,1.3) -- (7.5,1.3) ;
    \draw[black, thick] (7,0.7) -- (5,0.7);
            
    \fill [fill=red] (7,0.7) -- (7.5,1.3) --(7.5,0.3)--(7,-0.3) ;
    \draw[black,densely dotted] (7,0.7) -- (7.5,1.3) --(7.5,0.3)--
    (7,-0.3) --cycle;   
    \node[] (g) at (4.9,0){};
    \node[] (h) at (7.3,1){};
    \graph{(a)->[bend right](g)};
    \graph{(a)->[bend left](h)};
    \node[red] at (4.8,0.5){$\phi'$};
    \fill[red, rotate around={-15:(8.7,0.5)}] (8.7,0.5) ellipse (0.3 and 0.7);
    \draw[black,densely dotted, rotate around={-15:(8.7,0.5)}] (8.7,0.5) ellipse (0.3 and 0.7);
    \node[] (i) at (8.4,-0.15){};
    \node[] (j) at (10.7,-0.15){};
    \node[] (k) at (8.7,1.15){};
    \node[] (l) at (10.9,1.15){};
    \node[] (m) at (8.25,0.4){};
    \node[] (n) at (10.5,0.4){};
    \node[] (o) at (8.9,0.55){};
    \node[] (p) at (10.9,0.55){};
    \node[red] at (8.25,0.8) {$\phi''$};
    \draw[black, thick] (m)--(n);
    \draw[black, densely dotted] (i)--(j);
    \draw[black, densely dotted] (k)--(l);
    \draw[blue,thick] (o)--(p);
    \fill[red, rotate around={-15:(10.7,0.5)}] (10.7,0.5) ellipse (0.3 and 0.7);
    \draw[black,densely dotted, rotate around={-15:(10.7,0.5)}] (10.7,0.5) ellipse (0.3 and 0.7);
    
    \node[] (x) at (1,-1.3) {$D^1\times D^1$};
    \node[] (y1) at (6.1,-0.9) {$D^1\times D^1\times[0,\frac{1}{2}]$};
    \node[] (y2) at (6.1,-1.4) {$\approx D^1\times D^2$};
    \node[] (z1) at (10,-0.9) {$\left(D^1\times D^1\times[0,\frac{1}{2}]\right)/\sim_\frac{1}{2}$};
    \node[] (z2) at (10,-1.4){$\approx D^1\times D^2$};
    \end{tikzpicture}
    \caption{A $2$-dimensional \handle{1} inducing a $3$-dimensional \handle{1}.}
    \label{fig:induced handle example}
\end{figure}

In Figure~\ref{fig:induced handle example}, we assume the black and blue edges of the \handle{1} $D^1\times D^1$ are in the boundary of $M$, so the rectangular faces spanned by them in the product $M\times [0,\frac{1}{2}]$ will each be collapsed to an edge in the half open book $M\times [0,\frac{1}{2}]/\sim_{\frac{1}{2}}$.

From now on let $M$ be a compact $3$-manifold with non-empty boundary.

\begin{algorithm}\label{algorithm1}
Given a \Hd of $M$, one obtains a \Kd of the half open book with page $M$ by applying the following algorithm:
    \begin{enumerate}
        \item Place the given \Hd in the $yz$-plane in $\mathbb{R}^3$, assuming the positive $x$-direction points out of the paper.
        \item\label{algo1 step2} Replace every \handle{1} attaching region with a pair of $D^3$'s as shown in Figure~\ref{fig:alignment}.
        \item Add in the $yz$-plane a parallel knot to each \handle{2} attaching sphere to indicate the blackboard framing.
    \end{enumerate}
\end{algorithm}
\begin{figure}[ht]
    \centering
    \includegraphics[width=0.5\textwidth]{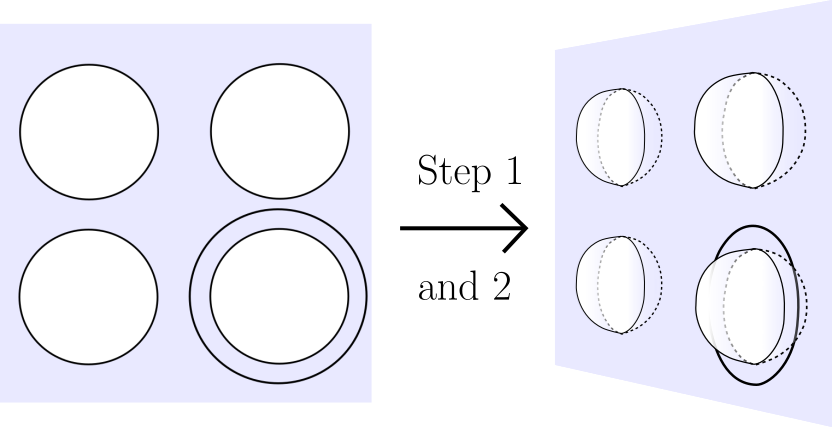}
\caption{Replace every \handle{1} attaching region in the given \Hd with a pair of $D^3$'s.}
\label{fig:alignment}
\end{figure}

\begin{proof}
      By Proposition~\ref{induced handle decomposition}, $M\times [0,\frac{1}{2}]$ and $M\times [0,\frac{1}{2}]/\sim_{\frac{1}{2}}$ admit equivalent handle decompositions and are diffeomorphic to each other, thus $M\times [0,\frac{1}{2}]$ and $M\times [0,\frac{1}{2}]/\sim_{\frac{1}{2}}$ share an equivalent Kirby diagram. Let us construct a \Kd of the former one. A handle decomposition of $M$ induces a handle decomposition of $M\times [0,\frac{1}{2}]$ in the following way:
     \begin{itemize}
        \item A $3$-dimensional \handle{0} induces a $4$-dimensional \handle{0}. So we will now attach handles to $\partial D^4-\{$point$\}\approx\mathbb{R}^3$ instead of to $\partial D^3-\{$point$\}\approx\mathbb{R}^2$. Since \Hds are defined in $\mathbb{R}^2$, they can be embedded in the $yz$-plane in $\mathbb{R}^3$.
        \item A $3$-dimensional \handle{1} induces a $4$-dimensional \handle{1}. Thus the attaching region of a \handle{1} becomes a pair of $D^3$'s.
        \item A $3$-dimensional \handle{2} induces a $4$-dimensional \handle{2} with framing given by the product structure of $M\times[0,\frac{1}{2}]$, which coincides with the blackboard framing.
    \end{itemize}
\end{proof}

\section{Kirby diagrams of 4-dimensional open books}
By assumption, any $3$-dimensional page and therefore any $4$-dimensional half open book, with the induced handle decomposition (Proposition~\ref{induced handle decomposition}), has no $3$-handles. Gluing a half open book with one \handle{0}, $x$ \handles{1}, and $y$ \handles{2} to a $4$-manifold $X$ along common boundary introduces $y$ \handles{2}, $x$ \handles{3}, and one \handle{4}, i.e.\ adds a $y$-component framed link to a \Kd of $X$. Since an open book is obtained by gluing two half open books, we add a framed link to a \Kd of the half open book to obtain a \Kd of the open book.

\subsection{Trivial monodromy} Let $M$ be the page, a compact $3$-manifold with non-empty boundary, as before. We denote the identity map on the page by $\operatorname{id}$.
\begin{algorithm}\label{algorithm2}
Given a \Hd of $M$, one obtains a \Kd of the open book $\operatorname{Ob}(M,\operatorname{id})$ by applying the following algorithm:
   \begin{enumerate}
    \item\label{algo2 step1} Use Algorithm~\ref{algorithm1} to get a \Kd of the half open book with page $M$.
    \item\label{add 0 framed meridian} Add a $0$-framed meridian to each blackboard-framed \handle{2} attaching sphere in the \Kd of the half open book. 
   \end{enumerate}
\end{algorithm}
\begin{proof}
    By Proposition~\ref{prop} and Remark~\ref{open with id}, $\operatorname{Ob}(M,\varphi)$ is diffeomorphic to $D_{\varphi}(M\times [0,\frac{1}{2}]/\sim_\frac{1}{2})=D(M\times [0,\frac{1}{2}]/\sim_\frac{1}{2})$ when the monodromy $\varphi$ is (isotopic to) the identity map. For completeness, we summarize the proof given in~\cite{gs} showing that adding a $0$-framed meridian to each \handle{2} attaching sphere in a \Kd gives a \Kd of the double.
    
    A handle decomposition on $DX=X\cup$handles is obtained by adding handles to $X$, where $X=M\times [0,\frac{1}{2}]/\sim_{\frac{1}{2}}$. It suffices to understand the induced \handles{2}. A \handle{2} $h$ in $X$ induces a \handle{2} $h'$ in $DX$ with the roles of core and cocore reversed. Since we are gluing using the identity map, the attaching map of an induced \handle{2} $\varphi\colon \partial D^2\times D^2\rightarrow D^2\times \partial D^2\subset \partial X$ is given by interchanging the factors of $\partial D^2\times D^2$. Thus $h'$ is attached along the belt sphere of $h$, which is isotopic to a meridian of the attaching sphere of $h$. The framing of the induced \handle{2} is given by the product framing coming from the product structure of the \handle{2} $D^2\times D^2$, which coincides with the Seifert framing. Therefore to get a \Kd of the open book, add a $0$-framed meridian to each \handle{2} attaching sphere in a \Kd of the half open book.
\end{proof}
\begin{remark}
    The framings of a $4$-dimensional \handle{2} are classified by elements of $\pi_1(O(2))\approx \mathbb{Z}$. When the attaching sphere $\varphi(\partial D^2)$ bounds a Seifert surface, the attaching sphere can be pushed along it to give a parallel knot and hence a canonical choice of the $0$-th framing.
\end{remark}

\subsection{Non-trivial monodromy} 

Under the standard assumptions that handles are attached with non-decreasing indices to the boundary of handles of lower indices and that there is a unique \handle{0} and no \handle{3}, the cocore of a \handle{2} in $M$ is an arc in $M$ with endpoints in $\partial M$.

\begin{proposition}\label{cocore is half-meridian}
    The cocore of a \handle{2} in the back cover of the half open book with page $M$ corresponds to a half-meridian behind the $yz$-plane in a \Kd of the half open book.
\end{proposition}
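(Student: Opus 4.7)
The plan is to trace the cocore arc through the induced handle structure of the half open book (Proposition~\ref{induced handle decomposition}) and identify where it lands in the Kirby diagram produced by Algorithm~\ref{algorithm1}.

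First, I would use Proposition~\ref{induced handle decomposition}: a $3$-dimensional \handle{2} $h = D^2 \times D^1$ of $M$ induces a $4$-dimensional \handle{2} $h' \cong D^2 \times \bigl(D^1 \times [0, \tfrac{1}{2}]\bigr)$ in the half open book, whose cocore disk is $\{0\} \times D^1 \times [0, \tfrac{1}{2}]$. The cocore of $h$ sitting inside the back cover $M \times \{\tfrac{1}{2}\}$ is then the slice $\{0\} \times D^1 \times \{\tfrac{1}{2}\}$, which is one edge of this square-shaped cocore disk and therefore lies on the belt sphere of $h'$.

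Second, I would quote the explicit diffeomorphism $D^1 \times [0, \tfrac{1}{2}] \to D^2$ from the end of the proof of Proposition~\ref{induced handle decomposition}, which sends $D^1 \times \{0\}$ onto the upper hemisphere of $\partial D^2$ and $D^1 \times \{\tfrac{1}{2}\}$ onto the lower hemisphere. Under this map the back-cover cocore becomes the lower half of the belt circle of $h'$.

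Third, I would translate to the Kirby diagram. By Algorithm~\ref{algorithm1}, the attaching sphere of $h'$ sits in the $yz$-plane with blackboard framing, which identifies the normal $D^2$ of the attaching sphere with the disk perpendicular to the attaching curve, spanned by the in-plane normal direction and the $x$-axis out of the paper. Under this identification the belt circle is a small meridian around the attaching sphere, and its lower hemisphere becomes exactly the arc with $x < 0$, i.e., a half-meridian behind the $yz$-plane.

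The main obstacle, I expect, is keeping the two conventions compatible: one must check that the \emph{upper/lower hemisphere} direction in $D^2$ coming from the collapsing diffeomorphism of Proposition~\ref{induced handle decomposition} really corresponds to the \emph{front/back} of the $yz$-plane dictated by the blackboard framing in Algorithm~\ref{algorithm1}. This boils down to matching the product direction $[0, \tfrac{1}{2}]$ of the half open book with the out-of-paper $x$-direction in $\mathbb{R}^3$ — which is precisely how blackboard framing places the induced normal disk of $h'$ in space — after which the specific collapsing diffeomorphism pairs up the hemispheres with the front and back half-spaces as required.
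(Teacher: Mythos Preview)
Your argument is correct but takes a different route from the paper's. You work locally inside the induced $4$-dimensional \handle{2}: the back-cover cocore $\{0\}\times D^1\times\{\tfrac12\}$ sits on the belt sphere of $h'$, the collapsing diffeomorphism of Proposition~\ref{induced handle decomposition} sends it to the lower half of the belt circle, and the framing convention then places that half behind the $yz$-plane. The paper instead argues globally: it first reads the $3$-dimensional Heegaard diagram by treating the $yz$-plane as $\partial D^3$ with the \handle{0} filling the front half-space, so that the \handles{2} of $M$ are attached from behind and their cocores are visibly half-meridians there; it then identifies the two half-spaces of $\mathbb{R}^3$ in the Kirby diagram with the front and back covers of the half open book, and the proposition follows at once. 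Your approach is more explicit about the handle machinery, while the paper's is more geometric and also delivers the identification \emph{half-space $=$ cover} as a byproduct---an identification quoted again in the proof of Algorithm~\ref{algorithm3}. Note that the obstacle you flag (aligning the $[0,\tfrac12]$-direction with the $x$-axis) is essentially that same global identification in disguise, so both proofs ultimately rest on the same ingredient; you reach it from the handle side rather than declaring it up front.
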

\begin{proof}
    Recall that a \Hd of $M$ records where $1$- and \handles{2} are attached to the boundary of the \handle{0}. Situate a \Hd in the $yz$-plane of $\mathbb{R}^3$ and view the region in front of the $yz$-plane as the interior of the \handle{0} and the $yz$-plane as the boundary of the \handle{0} (with a point removed). A \handle{2} of $M$ is being attached to $\partial D^3$ from behind the $yz$-plane in this perspective.
    \begin{figure}[ht]
        \centering
        \includegraphics[scale=0.36]{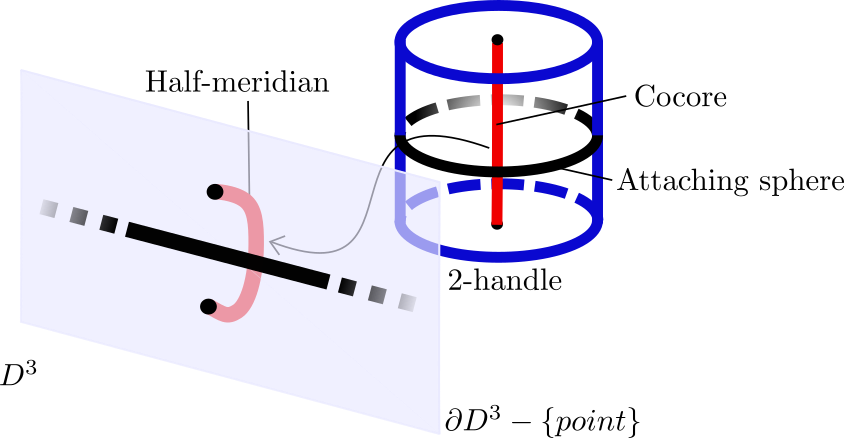}
        \caption{Attaching a \handle{2} along $\partial D^3$.}
        \label{fig:half-meridian is cocore}
    \end{figure}
    
    Now we recall the construction of a \Kd of a half open book. We place a \Hd of the page in the $yz$-plane in $\mathbb{R}^3$, replace each $D^2$ with a $D^3$, and add blackboard framing to each \handle{2} attaching sphere. The blank space in $\mathbb{R}^3$ is the boundary of the half open book, which consists of two copies of $M$, namely the front and back covers. Without loss of generality, we may identify the region behind the $yz$-plane with the back cover and the region in front of the $yz$-plane with the front cover. Thus the cocore of a \handle{2} in the back cover corresponds to a half-meridian, with endpoints on the $yz$-plane, of the \handle{2} attaching sphere as shown in Figure~\ref{fig:half-meridian is cocore}. Similarly, a half-meridian in front of the $yz$-plane corresponds to the cocore of a \handle{2} in the front cover.
\end{proof}

\begin{algorithm}\label{algorithm3}
Given a \Hd of $M$ and the image under the monodromy $\varphi$ of each \handle{2} in $M$, one obtains a \Kd of the open book $\operatorname{Ob}(M,\varphi)$ by applying the following algorithm:
    \begin{enumerate}
        \item\label{algo3 step1} Use Algorithm~\ref{algorithm1} to get a \Kd of the half open book with page $M$.
    \item\label{add half-meridian} Add behind the $yz$-plane a half-meridian with blackboard framing to each \handle{2} attaching sphere in the \Kd of the half open book as in Figure~\ref{fig:half-meridian}. 
        \item\label{algo3 monodromy} Add in front of the $yz$-plane a half-meridian with blackboard framing to each \handle{2} attaching sphere in the \Kd of the half open book, then replace them with their images under $\varphi$.
    \end{enumerate}
\end{algorithm}
\begin{figure}[ht]
        \centering
        \includegraphics[scale=0.5]{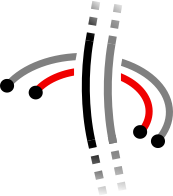}
        \caption{Step~\ref{add half-meridian} of Algorithm~\ref{algorithm3}.}
        
        \label{fig:half-meridian}
    \end{figure}
\begin{proof}
    By Proposition~\ref{prop}, $\operatorname{Ob}(M,\varphi)$ is diffeomorphic to $D_{\varphi}(M\times [0,\frac{1}{2}]/\sim_\frac{1}{2})$, the goal is to construct a \Kd of $D_{\varphi}(M\times [0,\frac{1}{2}]/\sim_\frac{1}{2})$. Recall that the boundary of the half open book $M\times [0,\frac{1}{2}]/\sim_\frac{1}{2}$ consists of a front and back cover, which are diffeomorphic to the page $M$. 
    \begin{figure}[ht]
        \centering
        \includegraphics[scale=0.2]{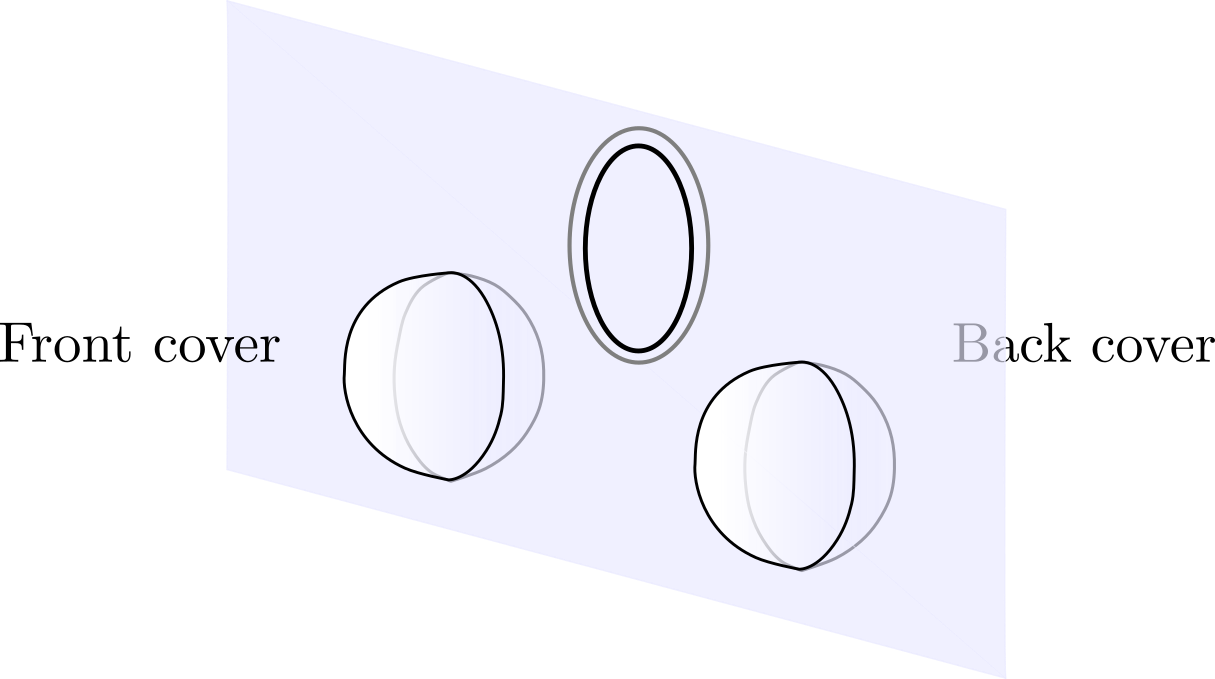}
        \caption{\Kd of half open book (with page punctured solid torus)}
        \label{fig:kd of hob}
    \end{figure}
    Let us focus on the \Kd of the half open book lying in the $yz$-plane in $\mathbb{R}^3$ provided by Algorithm~\ref{algorithm1}. The blank space in a \Kd of a manifold without $3$- and \handles{4} represents a subset of the boundary. We identify the region in front of and behind the $yz$-plane with the front and back cover of the half open book respectively, as described in the proof of Proposition~\ref{cocore is half-meridian}. The $yz$-plane itself, where the front and back covers meet, corresponds to the binding $\partial M\subset M\times [0,\frac{1}{2}]/\sim_{\frac{1}{2}}$.    
    
    Also recall Definition~\ref{to open hob}, $D_{\varphi}(M\times [0,\frac{1}{2}]/\sim_\frac{1}{2})$ is obtained by gluing two copies of $M\times [0,\frac{1}{2}]/\sim_\frac{1}{2}$ along the boundary. Namely,
    \begin{itemize}
        \item glue the front cover of the second copy to the back cover of the first copy with the identity map and
        \item glue the back cover of the second to the front cover of the first with the given monodromy $\varphi$.
    \end{itemize} 
    A handle decomposition on $D_{\varphi}(M\times [0,\frac{1}{2}]/\sim_\frac{1}{2})$ is obtained by adding handles to the half open book $M\times [0,\frac{1}{2}]/\sim_\frac{1}{2}$. It suffices to understand the induced \handles{2}. The attaching sphere of an induced \handle{2} is the union of two "attaching arcs", one in the front and one in the back cover. In the back cover, where the gluing map is the identity map, the attaching arc is given by a half-meridian of the \handle{2} attaching sphere as in the proof of Algorithm~\ref{algorithm2}. In the front cover, the gluing map is given by the monodromy $\varphi$ and not the identity map. Hence instead of a half-meridian, the attaching arc is given by the image under $\varphi$ of the half-meridian. Using the identification of a half-meridian with the cocore of a \handle{2} (Proposition~\ref{cocore is half-meridian}), we conclude that in the front cover, the attaching arc is given by the image of the cocore of the \handle{2}. Similarly, a framing-indicating parallel knot consists of two arcs: a parallel arc of the half-meridian behind the $yz$-plane and the image under $\varphi$ of a parallel arc of the half-meridian (or cocore of \handle{2}) in the other half-space.
\end{proof}

\begin{figure}[ht]
    \centering
    \includegraphics[width=0.9\textwidth]{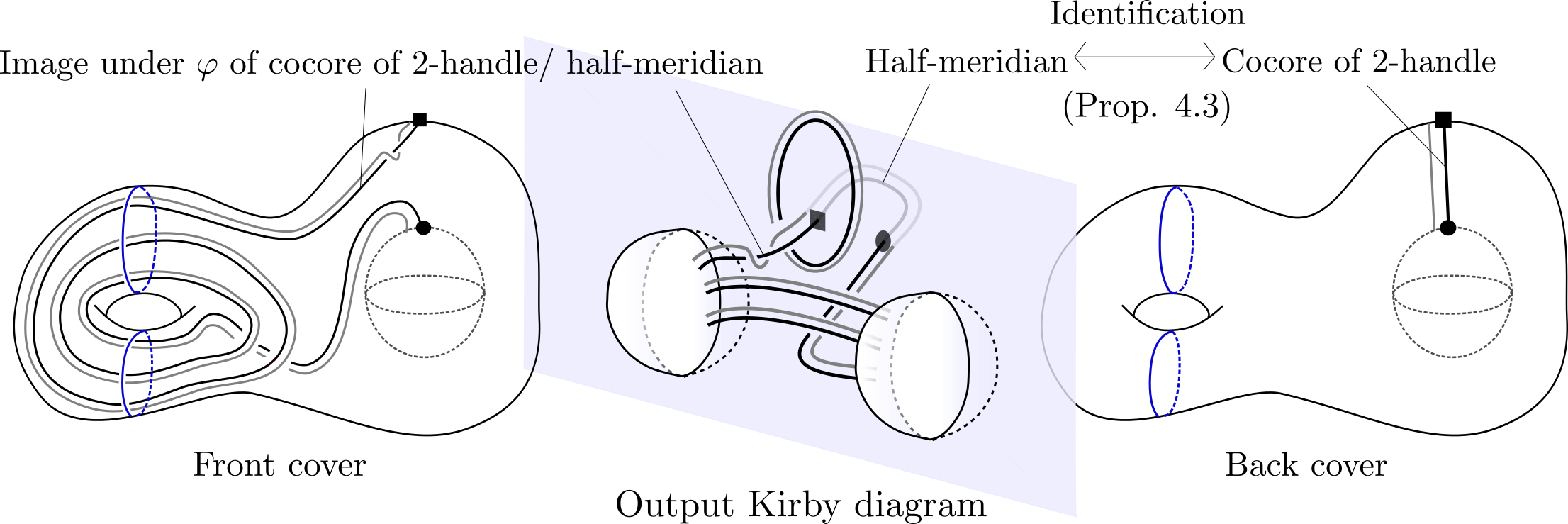}
    \caption{\Kd of open book with page punctured solid torus and monodromy the composition of 3 torus twists (Defintion~\ref{tt}).}
\end{figure}

\section{Examples}\label{examples}
Given $g,n\geq0$, let $H_{g,n}$ denote the \textit{punctured handlebody} obtained from a \handle{0} $D^3$ by attaching $g$ \handles{1} and $n$ \handles{2} to $\partial D^3$. For example, $H_{1,0}$ is a solid torus and $H_{0,1}$ is a punctured ball. When $g=n=1$, the punctured handlebody is also referred to as a punctured solid torus (Figure~\ref{fig:hd of punctured solid torus}). A punctured handlebody has a canonical handle decomposition and hence a canonical Heegaard diagram as shown in Figure~\ref{fig:canonical hd}.
\begin{figure}[ht]
    \centering
    \includegraphics[scale=0.25]{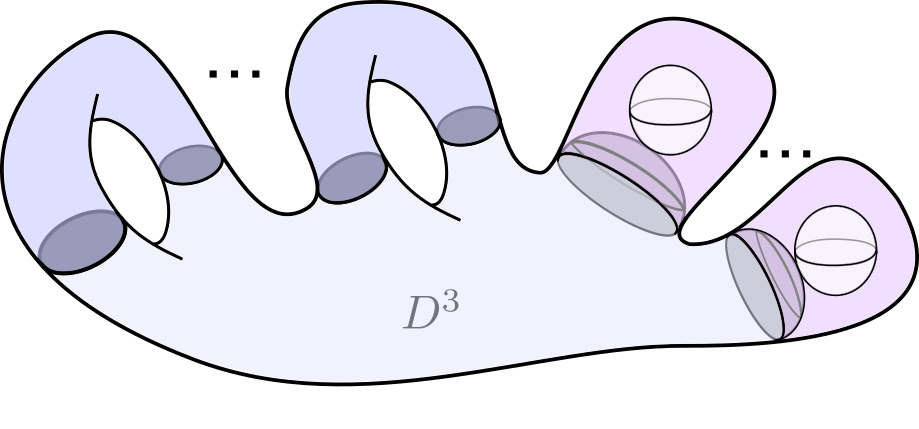}
    
    \includegraphics[scale=0.25]{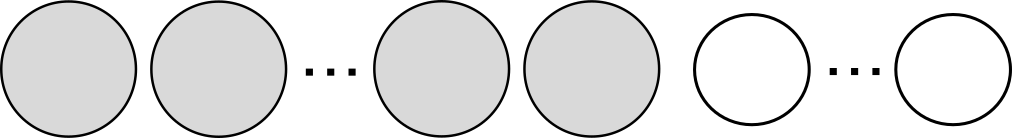}
    \caption{A punctured handlebody and its Heegaard diagram.}
    \label{fig:canonical hd}
\end{figure}

\subsection{Constructing Heegaard diagrams of 3-dimensional open books}
The construction of a \Hd of a $3$-dimensional open book using half open books is more straightforward, thus it is a good idea to see an example in dimension $3$ before returning to dimension $4$. We carefully go through the construction of a \Hd of the $3$-dimensional open book $\operatorname{Ob}(D^2\cup h^1,\psi)$ with page annulus. The method essentially follows~\cite{Honda}.

The half open book $(D^2\cup h^1)\times [0,\frac{1}{2}]/\sim_\frac{1}{2}$ is a solid torus $H_{1,0}$ as visualized in Figure~\ref{fig:page annulus}, whose canonical \Hd consists of a pair of $D^2$'s. Let $L$ be a line in $\mathbb{R}^2$ going through the centers of the pair of $D^2$'s, then the two regions separated by $L$ correspond to the front and back covers of the half open book. We only need to take care of the attaching sphere of the \handle{2} induced by the \handle{1} of $D^2\cup h^1$.

\begin{figure}[ht]
        \centering
        \includegraphics[scale=0.5]{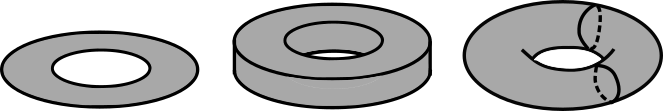}
        \caption{Left to right: $D^2\cup h^1$, $(D^2\cup h^1)\times [0,\frac{1}{2}]$, $(D^2\cup h^1)\times [0,\frac{1}{2}]/\sim_{\frac{1}{2}}$.}
        \label{fig:page annulus}
\end{figure}

When the monodromy $\psi=\operatorname{id}$ is trivial, add a meridian around $D^2$, because the attaching sphere coincides with the belt sphere of the \handle{1}. We obtain a \Hd of $S^1\times S^2$ as illustrated in Figure~\ref{fig:hd example id}. This is expected since $\operatorname{Ob}(D^2\cup h^1,\operatorname{id})$ is diffeomorphic to $DH_{1,0}$, and the double of a solid torus is diffeomorphic to $S^1\times S^2$.

\begin{figure}[ht]
    \centering
    \includegraphics[scale=0.28]{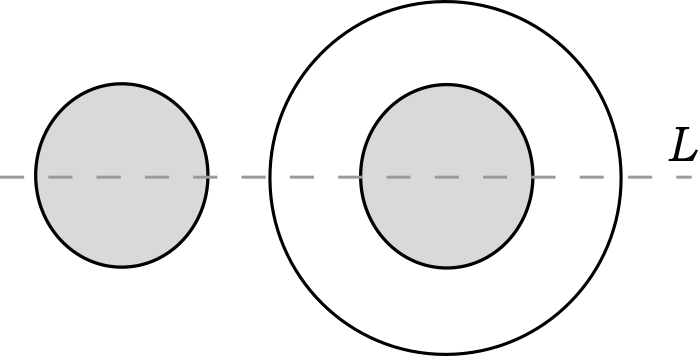}
    \caption{$\operatorname{Ob}(D^2\cup h^1,\operatorname{id})$ is diffeomorphic to $S^1\times S^2$.}
    \label{fig:hd example id} 
\end{figure}

Now we consider the monodromy $\psi$ given by the composition of three Dehn twists along the core of the annulus. The \Hds of $\operatorname{Ob}(D^2\cup h^1,\psi)$ and $\operatorname{Ob}(D^2\cup h^1,\operatorname{id})$ should look the same on one side of $L$ since we always glue using the identity map on the back cover. Without loss of generality, add a half meridian to a $D^2$ below \textit{L}. The non-trivial monodromy only plays a role above $L$ on the front cover of the half open book. Since the image of the cocore of the \handle{1} is an arc in $D^2\cup h^1$ running thrice through the \handle{1}, we add an arc above \textit{L} that intersects the $D^2$ thrice before joining with the half meridian as reflected in Figure~\ref{fig:hd example dehn}. We obtain a \Hd of the lens space $\operatorname{L}(3,1)$, which will be defined in Section~\ref{generalized lens space}.

\begin{figure}[ht]
    \begin{subfigure}{\textwidth}
        \centering
        \includegraphics[scale=0.28]{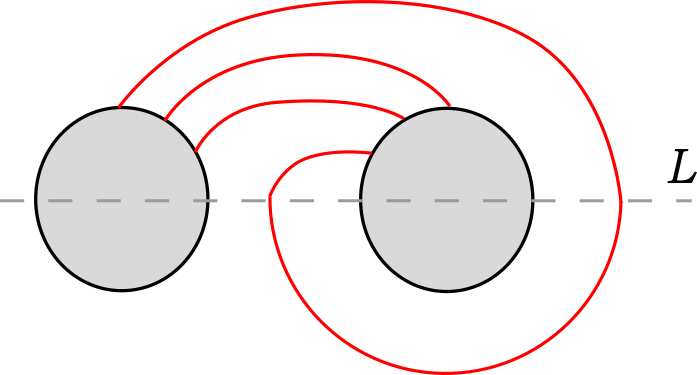}
        
    \end{subfigure}
    \begin{subfigure}{.45\textwidth}
        \centering
        \includegraphics[scale=0.3]{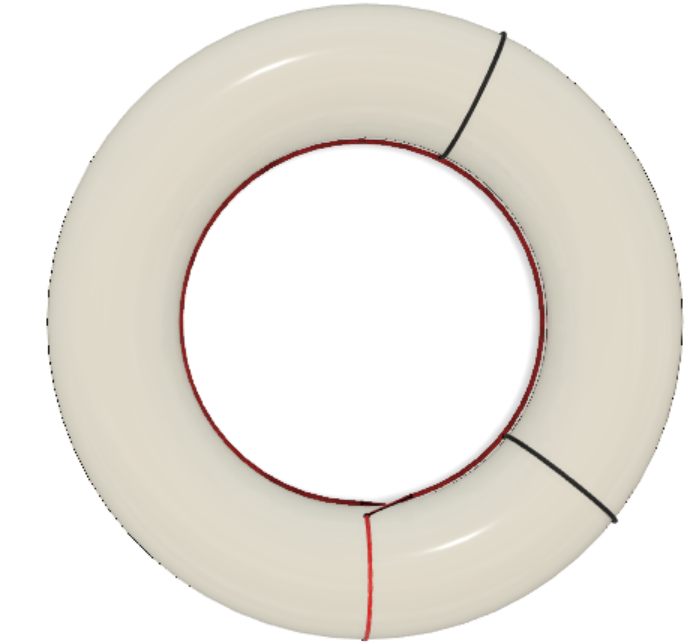}
        
    \end{subfigure}
    \hfill
    \begin{subfigure}{.45\textwidth}
        \centering
        \includegraphics[scale=0.28]{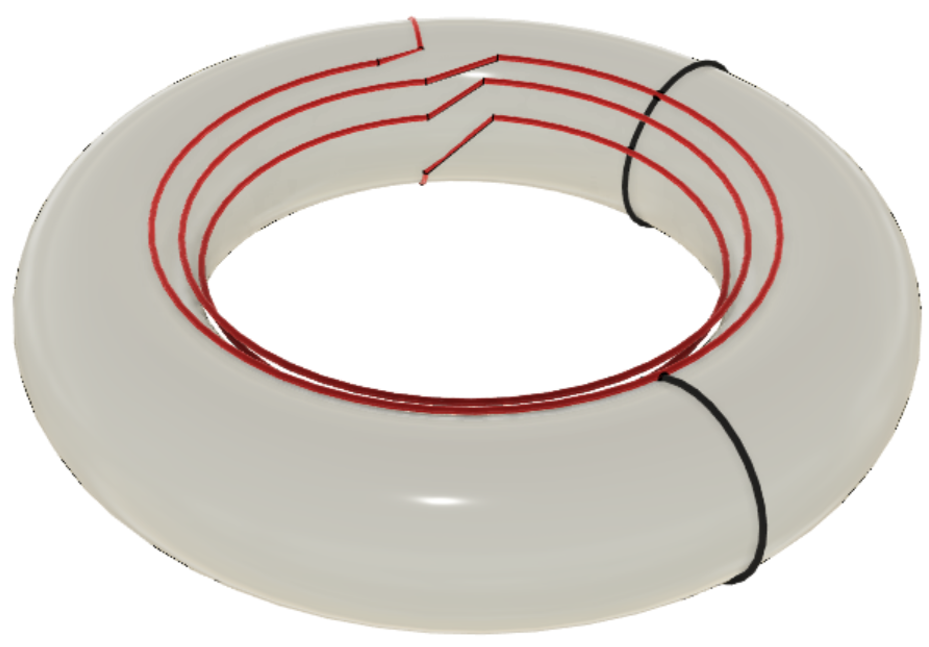}
        
    \end{subfigure}
    \caption{The attaching sphere (red) of the induced \handle{2} is marked on the Heegaard diagram (top), on the back cover (left), and on the front cover (right) of the half open book $H_{1,0}$.}
    \label{fig:hd example dehn}
\end{figure}

\subsection{2-sphere bundles over 2-sphere}\label{hopf link section}

Now we construct a \Kd of the open book with page punctured ball $H_{0,1}$ and trivial monodromy. The punctured ball is obtained by removing an open ball from $D^3$, or it can also be obtained by attaching a single \handle{2} to $D^3$. This handle decomposition gives rise to a \Hd consisting of a single unknot. Using Algorithm~\ref{algorithm1}, we obtain a \Kd of the half open book consisting of an unknot with blackboard framing, which is a \Kd of $D^2\times S^2$~\cite[120]{gs}. Finally, apply Step~\ref{add 0 framed meridian} of Algorithm~\ref{algorithm2} to get a Hopf link with each component labeled $0$ as in Figure~\ref{fig:punctured ball}. It is well known that the output is a \Kd of the trivial $S^2$-bundle over $S^2$~\cite[127]{gs}, thus $\operatorname{Ob}(H_{0,1},\operatorname{id})$ is diffeomorphic to $S^2\times S^2$.
\begin{figure}[ht]
    \begin{subfigure}{0.3\textwidth}
        \centering
        \includegraphics[scale=0.45]{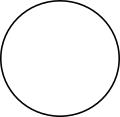}
        \caption{Input: $H_{0,1}$.}   
    \end{subfigure}
    \hfill
    \begin{subfigure}{0.3\textwidth}
        \centering
        \includegraphics[scale=0.45]{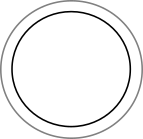}
        \caption{$\operatorname{hob}(H_{0,1})$.}
    \end{subfigure}
    \hfill
    \begin{subfigure}{0.3\textwidth}
        \centering
        \includegraphics[scale=0.45]{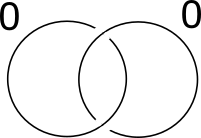}
        \caption{Output: $\operatorname{Ob}(H_{0,1},\operatorname{id})$}
        \label{fig:punctured ball}
    \end{subfigure}
    \caption{Applying Algorithm~\ref{algorithm2}.}
\end{figure}

The \textit{mapping class group} of the punctured ball $H_{0,1}$, denoted by $$\operatorname{MCG}(H_{0,1}, \partial H_{0,1}),$$ is the group of isotopy classes of orientation-preserving diffeomorphisms of the punctured ball that fix the boundary pointwise. Elements in $\operatorname{MCG}(H_{0,1},\partial H_{0,1})$ are equivalent to the isotopy classes of loops in the space $\Omega$ of all orientation-preserving diffeomorphisms of $S^2$. $\Omega$ deformation retracts to the rotation group $\operatorname{SO}(3)$~\cite{Smale}, thus $$\operatorname{MCG}(H_{0,1},\partial H_{0,1})\approx\pi_1(\Omega)\approx\pi_1(\operatorname{SO}(3))\approx\pi_1(\mathbb{RP}^3)\approx\mathbb{Z}/2\mathbb{Z}.$$ Consequently, there can be at most two non-diffeomorphic open books with the punctured ball as page. The generator of $\operatorname{MCG}(H_{0,1},\partial H_{0,1})$ is given by a boundary parallel sphere twist $\sigma$ (Defintion~\ref{st}) as discussed in~\cite[Lemma~2.5]{bbp}. Construction of a \Kd of $\operatorname{Ob}(H_{0,1},\sigma)$ using Algorithm~\ref{algorithm3} is demonstrated in Figure~\ref{fig:punctured ball2}.
\begin{figure}[ht]
     \begin{subfigure}{0.3\textwidth}
        \centering
        \includegraphics[scale=0.45]{punctured_ball_0_1.png}
        \caption{Input: $H_{0,1}$.}
    \end{subfigure}
    \hfill
    \begin{subfigure}{0.3\textwidth}
        \centering
        \includegraphics[scale=0.45]{punctured_ball_0_2.png}
        \caption{$\operatorname{hob}(H_{0,1})$.}
    \end{subfigure}
    \hfill
    \begin{subfigure}{0.3\textwidth}
        \centering
        \includegraphics[scale=0.45]{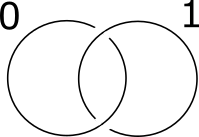}
        \caption{Output: $\operatorname{Ob}(H_{0,1},\sigma)$}
    \end{subfigure}
    \caption{Using Algorithm~\ref{algorithm3} we observe that $\operatorname{Ob}(H_{0,1},\sigma)$ is diffeomorphic to $S^2 \Tilde{\times} S^2$, the twisted $S^2$-bundle over $S^2$.}
    \label{fig:punctured ball2}
    \end{figure}

Let $(\sigma)^n=\sigma\circ\dots\circ\sigma$ denote the composition of $n$ sphere twists, which changes the framing of the cocore by $n$. $(\sigma)^n$ is isotopic to the identity map when $n$ is even and isotopic to a single sphere twist when $n$ is odd because a sphere twist generates $\operatorname{MCG}(H_{0,1},\partial H_{0,1})\approx\mathbb{Z}/2\mathbb{Z}$. According to Algorithm~\ref{algorithm3}, $\operatorname{Ob}(H_{0,1},(\sigma)^n)$ has a \Kd consisting of a Hopf link labeled $0$ and $n$. This shows that the diffeomorphism type of the $4$-manifold represented by the Hopf link labeled $0$ and $n$ only depends on the parity of $n$~\cite[130]{gs}, as summarized by Figure~\ref{fig:exotic punctured ball}.
\begin{figure}[ht]
    \centering
    \includegraphics[scale=0.4]{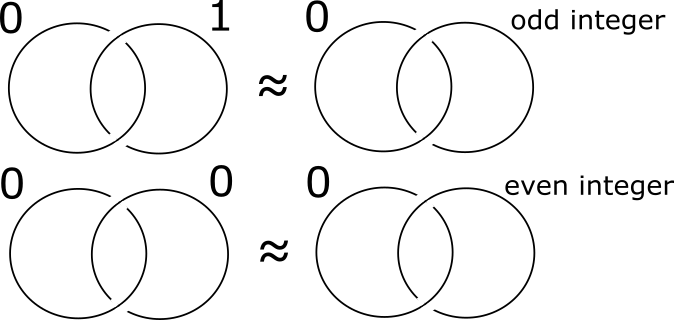}
    \caption{$S^2\Tilde{\times} S^2$ (top) and $S^2\times S^2$ (bottom): the only two open books with the punctured ball as page.}
    \label{fig:exotic punctured ball}
\end{figure}

\subsection{Spun and twist spun lens spaces}\label{generalized lens space}
    Let $p,q$ be relatively prime, non-negative integers. The \textit{lens space} \begin{align*} \operatorname{L}(p,q)&=S^3\big/(\mathbb{Z}/p\mathbb{Z})\\&=\big\{(z_1,z_2)\in\mathbb{C}^2\big| |z_1|^2+|z_2|^2=1\big\}\big/(z_1,z_2)\sim(e^{2\pi i/p}\cdot z_1,e^{2\pi iq/p}\cdot z_2) \end{align*} is a closed $3$-manifold, which can also be obtained by gluing together two solid tori with a diffeomorphism of on the boundary that takes a meridian of one torus to a curve $-p\lambda+q\mu$ on the other torus, where $\lambda$ and $\mu$ denote the longitude and meridian, respectively. The solid torus can be decomposed into a \handle{0} and a \handle{1}. Therefore the lens space can be decomposed into a \handle{0}, a \handle{1}, a \handle{2} and a \handle{3}. (The $2$- and \handles{3} are induced by the $0$- and \handles{1}.) To get a handle decomposition of the punctured lens space $\operatorname{L}(p,q)-D^3$, simply remove the \handle{3}. This handle decomposition leads to a \Hd of $\operatorname{L}(p,q)-D^3$ as depicted in Figure~\ref{fig:hd of lens space}.

Figure~\ref{fig:lens space} demonstrates the construction of a \Kd of the open book $\operatorname{Ob}(\operatorname{L}(3,1)-D^3,\operatorname{id})$ using Algorithm~\ref{algorithm2}. 
\begin{figure}[ht]
    \centering
    \includegraphics[scale=0.25]{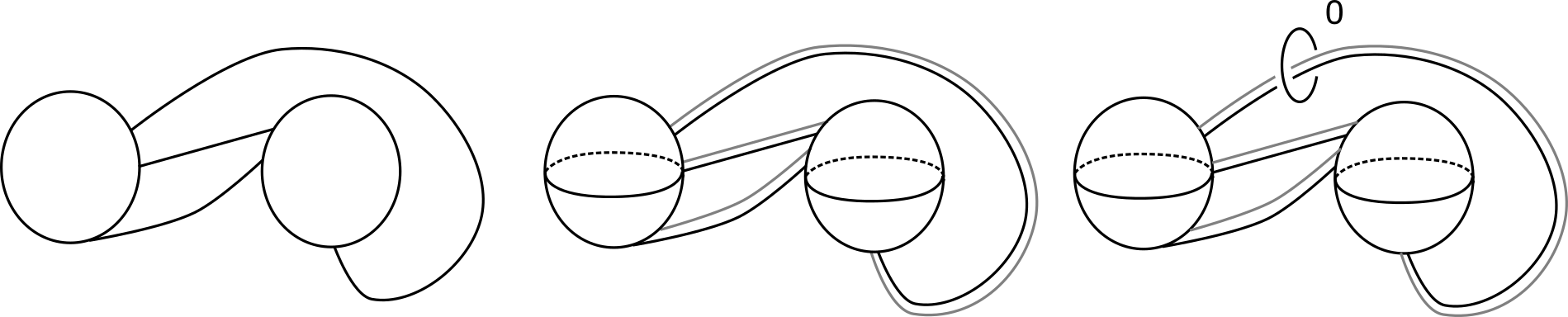}
    \caption{Left to right: input \Hd of $\operatorname{L}(3,1)-D^3$, a \Kd of half open book with page $\operatorname{L}(3,1)-D^3$, output \Kd of the open book $\operatorname{Ob}(\operatorname{L}(3,1)-D^3,\operatorname{id})$. }
    \label{fig:lens space}
\end{figure}

    Given a closed 3-manifold $M$, let $\mathring{M}$ denote $M$ with an open $3$-ball removed. Gordon~\cite{Gordon} defines the \textit{spin} of $M$ to be obtained by gluing $\mathring{M}\times S^1$ to $S^2\times D^2$ using the identity map on the boundary $\operatorname{id}_{S^2\times S^1}$. Observe that the open book $\operatorname{Ob}(\operatorname{L}(p,q)-D^3,\operatorname{id})$ is the spin of a lens space, also known as a \textit{spun} lens space. One obtains a \Kd of a spun lens space as illustrated by Figure~\ref{fig:generalized lens space}.
    
    The \textit{twisted spin} of $M$ is obtained by gluing $\mathring{M}\times S^1$ to $S^2\times D^2$ by the unique self-diffeomorphism of $S^2 \times S^1$ not extending over $S^2 \times D^2$~\cite{gluck}. Since twisted spin of the lens space $\operatorname{L}(p,q)$ is diffeomorphic to $\operatorname{Ob}(\operatorname{L}(p,q)-D^3,\sigma)$, where $\sigma$ is a boundary parallel sphere twist (Defintion~\ref{st}), we can apply Algorithm~\ref{algorithm3}. 
    \begin{figure}[ht]
    \begin{subfigure}{0.3\textwidth}
        \centering
        \includegraphics[width=\textwidth]{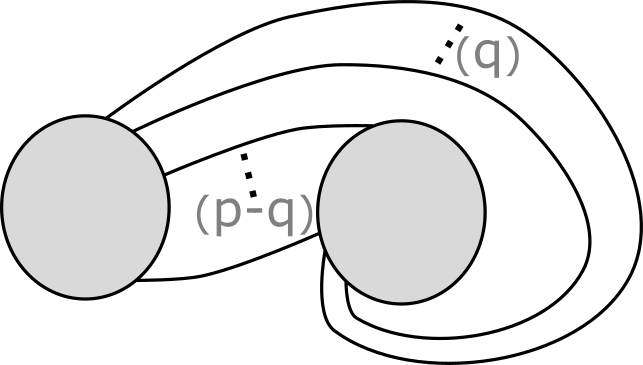}
        \caption{A \Hd of $\operatorname{L}(p,q)-D^3$.}   
    \label{fig:hd of lens space}   
    \end{subfigure}
    \hfill
    \begin{subfigure}{0.3\textwidth}
        \centering
        \includegraphics[width=\textwidth]{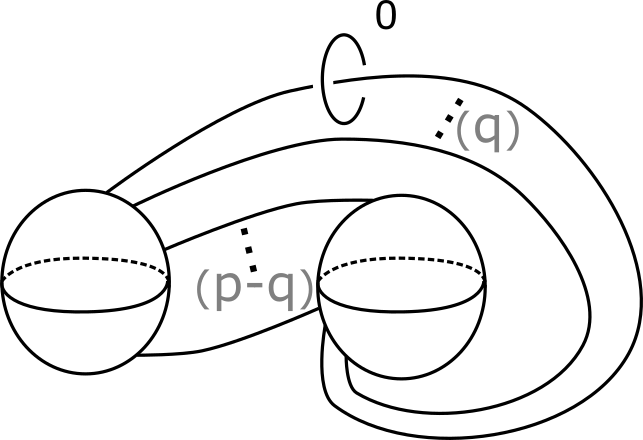}
        \caption{A \Kd of spun lens space.}
    \end{subfigure}
    \hfill
    \begin{subfigure}{0.3\textwidth}
        \centering
        \includegraphics[width=\textwidth]{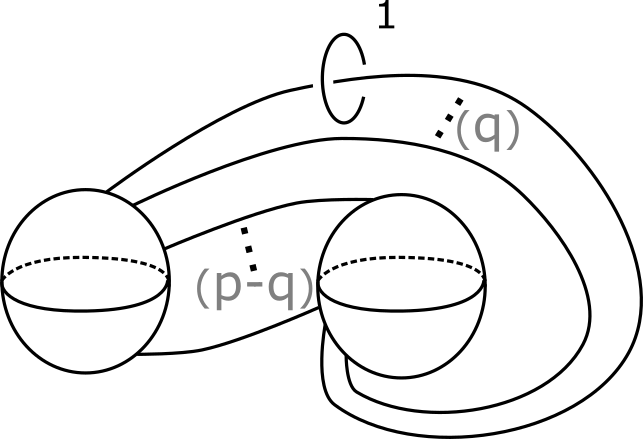}
        \caption{A Kirby diagram of twisted spun lens space.}
    \end{subfigure}
    \caption{(For clarity, blackboard framing is omitted here.)}
    \label{fig:generalized lens space}
    \end{figure}

\subsection{Open books with page handlebody}

Let us look at \Kds of the family of open books with page handlebody $H_{g,0}$, $g\geq0$. Starting with the most basic case $g=0$ and trivial monodromy, we would like to construct a \Kd of $\operatorname{Ob}(D^3,\operatorname{id})$ using two different input \Hds of $D^3$. Firstly, the empty diagram is by definition the result of attaching nothing to $D^3$, thus is a \Hd of $D^3$. The empty diagram remains empty after applying each step in Algorithm~\ref{algorithm2}, and thus we obtain a \Kd of $S^4$. Secondly, by handle cancellation~\cite[Proposition~4.2.9]{gs}, the "dumbbell" as illustrated in Figure~\ref{fig:dumbbell} is another \Hd of $D^3$. By handle cancellation the output of Algorithm~\ref{algorithm2} is again a \Kd of $S^4$. Therefore, independent of the choice of input, we conclude that $\operatorname{Ob}(D^3,\operatorname{id})$ is diffeomorphic to $S^4$.
\begin{figure}[ht]
    
        \centering
        \includegraphics[scale=0.5]{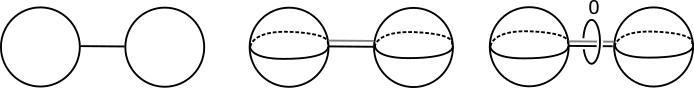}
     
    \caption{Left to right: a \Hd of $D^3$, a \Kd of half open book with page $D^3$, a \Kd of $S^4$.}
    \label{fig:dumbbell}
\end{figure}

Given $g>0$, each one of the $g$ \handles{1} of the page $H_{g,0}$ induces a \handle{1} of $\operatorname{hob}(H_{g,0})$. Thus, $\operatorname{hob}(H_{g,0})$ is diffeomorphic to the result $\#_g S^1\times D^3$ of attaching $g$ \handles{1} to $D^4$. A \Kd of this manifold with non-empty boundary, as given by Algorithm~\ref{algorithm1}, consists of $g$ pairs of $D^3$'s. Since an open book with trivial monodromy is obtained by taking the double of the half open book, $\operatorname{Ob}(H_{g,0},\operatorname{id})$ is diffeomorphic $\#_g S^1\times S^3$. The output \Kd of 
Algorithm~\ref{algorithm2}, also consisting of $g$ pairs of $D^3$'s, should be interpreted as the closed manifold $\#_g S^1\times S^3$.

Algorithms~\ref{algorithm2} and~\ref{algorithm3} are done after the first step in the absence of \handles{2}. Consequently, the output \Kd of $\operatorname{Ob}(H_{g,0},\operatorname{id})$ coincides with the output \Kd of $\operatorname{Ob}(H_{g,0},\varphi)$ for any monodromy $\varphi\colon H_{g,0}\rightarrow H_{g,0}$. This reflects the triviality of the mapping class group $\operatorname{MCG}(H_{g,0},\partial H_{g,0})$ -- \ two diffeomorphisms $F, F'$ on $H_{g,0}$ are isotopic if and only if their restrictions $\partial F, \partial F'$ to the boundary are isotopic~\cite{hensel}.

\section{Open books with page punctured solid torus}\label{monodromy on punctured handlebodies section}
The goal of this section is to introduce notions and techniques in preparation for Section~\ref{application}. Firstly, we define so-called \textit{torus twists} and \textit{sphere twists} monodromies on a punctured handlebody. Then we construct a \Kd of an open book with page punctured solid torus and monodromy a composition of torus and sphere twists. It turns out that such twists provide us with sufficiently many monodromies needed in the proof of Theorem~\ref{not unique}. Finally, we show how to obtain an entire family of \Kds of an open book with page punctured solid torus and monodromy a composition of torus and sphere twists.

\subsection{Twists along sphere and torus}
Let $S$ be an embedded surface with trivial normal bundle in a $3$-manifold $M$. Identifying a regular neighborhood of $S$ in $M$ with $S \times [0,1]$ and choosing a representation $f_t$ of a generator of $\pi_1(\operatorname{Diff}(S),\operatorname{id})$ gives rise to a monodromy on $M$ called \textit{twist along} $S$ given by
\begin{equation*}
          \begin{cases}
            f_t\times \operatorname{id_{[0,1]}} & \text{on } S\times [0,1], \\
            \operatorname{id} & \text{elsewhere.}
          \end{cases}
        \end{equation*}
Note that properly embedded spheres and tori are the only closed, orientable surfaces for which the group $\pi_1(\operatorname{Diff}(S),\operatorname{id})$ is non-trivial~\cite{EE}. In particular, the generator is unique for $S^2$ as seen in Section~\ref{hopf link section}.

Given a punctured handlebody $H_{g,n}$, we indicate the index of a handle in the superscript and enumerate handles of each index in the subscript as follows $$H_{g,n}=D^3\cup h^1_1\cup h^1_2\cup\dots\cup h^1_g\cup h^2_1\cup h^2_2\cup\dots\cup h^2_n.$$ 

\begin{definition}\label{st}
    Let $n>0$ and let $S^j$ be a properly embedded sphere in $H_{g,n}$ parallel to the boundary of $D^3\cup h^2_j\subset H_{g,n}$ for each $j=1,\dots,n$. A twist along $S^j$ is called a \textit{sphere twist} and is denoted by $\sigma^j$.
\end{definition}

A sphere twist $\sigma^j$ changes the framing of the cocore of the $j$-th \handle{2} while fixing the cocore. When $n=1$, we simply denote a sphere twist on the handlebody with one puncture by $\sigma$.
\begin{figure}[ht]
    \centering
    \includegraphics[scale=0.17]{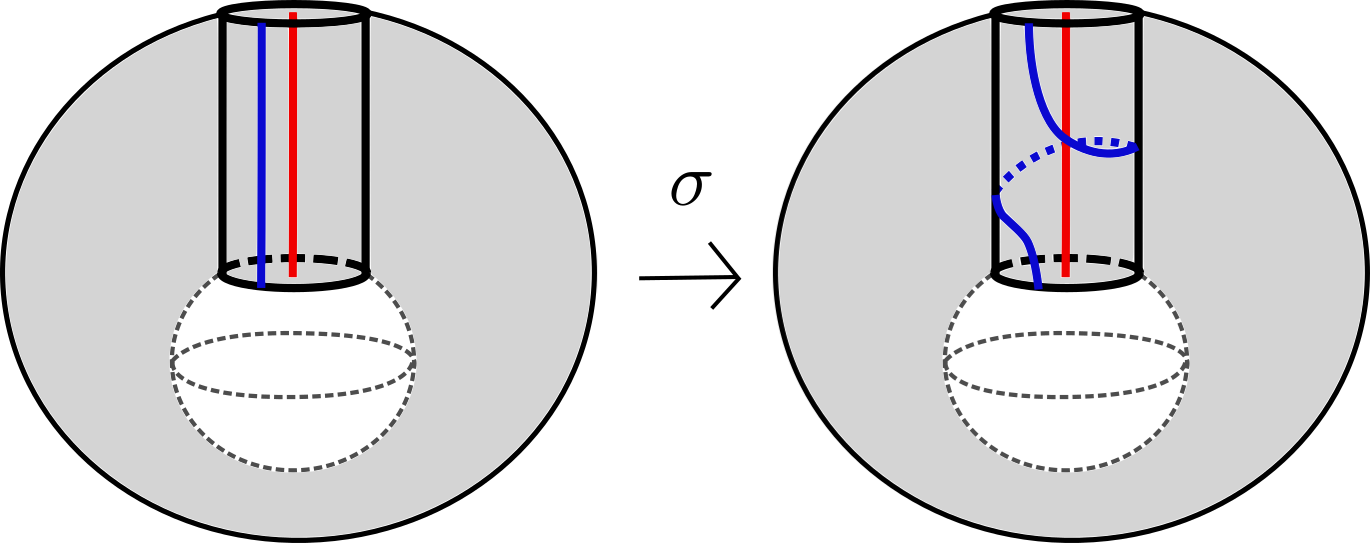}
    \caption{A \textit{sphere twist} on the punctured ball $H_{0,1}$.}
    \label{fig:sphere twist}
\end{figure}

\begin{definition}\label{tt}
    Let $g,n>0$ and let $T^j_l$ be a properly embedded torus in $H_{g,n}$ parallel to the boundary of $D^3\cup h^1_l\cup h^2_j\subset H_{g,n}$ for each $l=1,\dots,g$ and $j=1,\dots,n$. A twist along $T^j_l$ is called a \textit{torus twist} and is denoted by $\tau^j_l$.
\end{definition}

A torus twist $\tau^j_l$ pushes the puncture inside $D^3\cup H^2_j\subset H_{g,n}$ around the $l$-th \handle{1} and back to its starting position, or equivalently, it sends the cocore of the $j$-th \handle{2} to an arc in $H_{g,n}$ running through the $l$-th \handle{1}. Similarly, let $\tau$ denote a torus twist when $g=n=1$. 
\begin{figure}[ht]
        \includegraphics[scale=0.2]{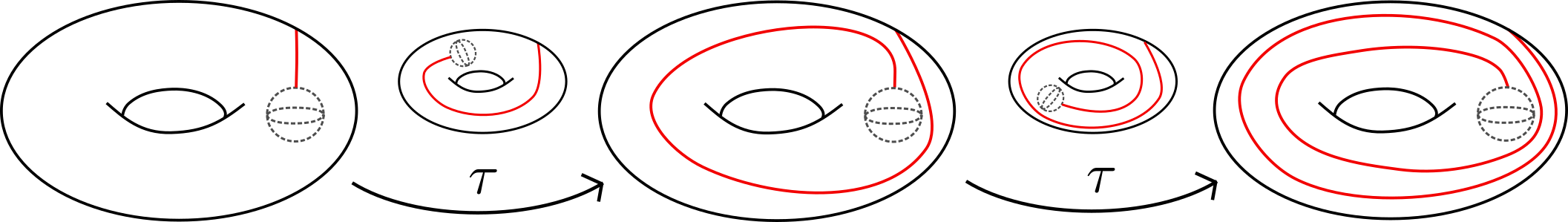}
        
        \caption{\textit{Torus twist} on the punctured solid torus $H_{1,1}$.}
        \label{fig:torus twist}
\end{figure}

Composing two torus twists pushes the puncture around the \handle{1} twice, thus the cocore of the \handle{2} gets mapped to an arc running through the \handle{1} twice and so on.

\begin{example}
The composition of torus twists $\tau^1_2\circ\tau^2_2\colon H_{2,2}\rightarrow H_{2,2}$ sends the cocore of the second \handle{2} to an arc running through the second \handle{1} and sends the cocore of the first \handle{2} to an arc also running through the second \handle{1} as demonstrated by Figure~\ref{fig:subtorus twist composition}.
\begin{figure}[ht]
    \centering
    \includegraphics[scale=0.2]{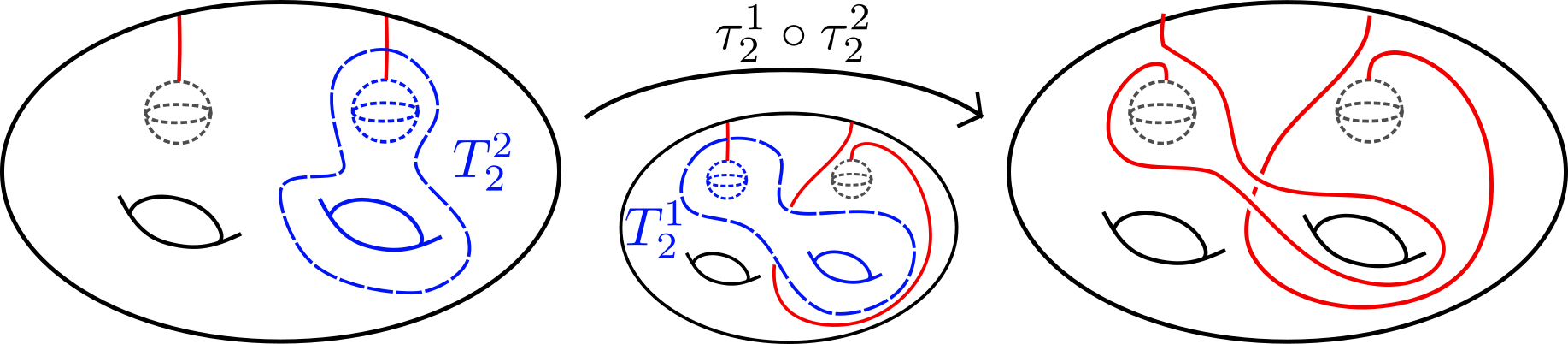}
    \caption{A twist along $T^2_2$ followed by a twist along $T^1_2$.}
    \label{fig:subtorus twist composition}
\end{figure}
\end{example}

\subsection{An example}\label{punctured torus}

    We apply Algorithm~\ref{algorithm3} step-by-step with inputs:\begin{itemize}
        \item a \Hd of the punctured solid torus $H_{1,1}$ as in Figure~\ref{fig:hd of punctured solid torus} and
        \item image under $\sigma\circ\tau\circ\tau\circ\tau$ of the \handle{2}. (Figure~\ref{fig:cocore under monodromy} only shows the image of the cocore of the \handle{2}.)
    \end{itemize}

\begin{figure}[ht]
        \centering
        \includegraphics[scale=0.2]
{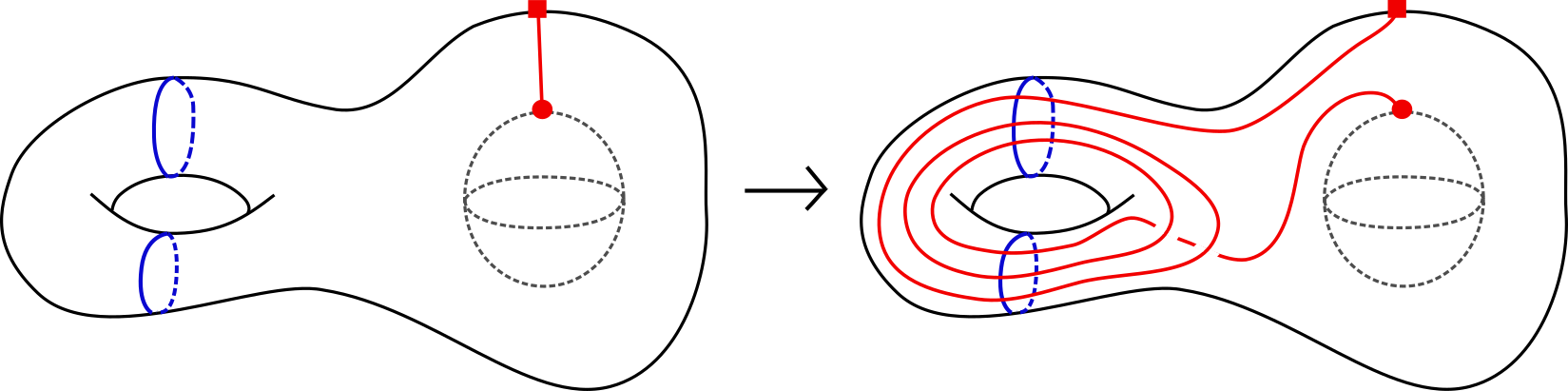}
       \caption{}
        \label{fig:cocore under monodromy}
\end{figure}    

Step~\ref{algo3 step1}: Use Algorithm~\ref{algorithm1} to get a \Kd of the half open book with page $H_{1,1}$, which consists of a pair of $D^3$'s and an unknot with blackboard framing in the $yz$-plane in $\mathbb{R}^3$. Step~\ref{add half-meridian}: Add behind the $yz$-plane a half-meridian with blackboard framing to the unknot. Step~\ref{algo3 monodromy}: Add in front of the $yz$-plane a half-meridian with blackboard framing to the unknot and replace it with its image under $\sigma\circ\tau\circ\tau\circ\tau$. In Figure~\ref{fig:cocore under monodromy}, the pair of circles (blue) on the torus each bound a disk, which then bounds a solid cylinder inside $H_{1,1}$. This solid cylinder corresponds to the \handle{1} as seen in Figure~\ref{fig:hd of punctured solid torus}. Observe that the cocore of the \handle{2} (red) gets mapped to an arc running through the \handle{1} three times. Thus the image of the half-meridian is an arc running through $D^3$ thrice. Since the monodromy contains a sphere twist, the image of a parallel arc of the cocore twists once along the image of the cocore.

\begin{figure}[ht]
    \centering
    \includegraphics[scale=0.27]{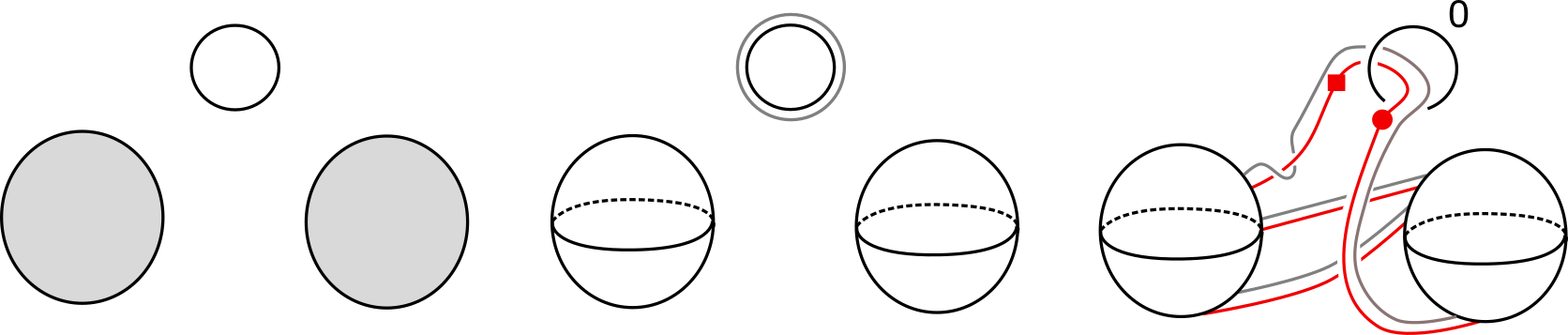}
    \caption{Left to right: a \Hd of $H_{1,1}$, a \Kd of half open book, a \Kd of $\operatorname{Ob}(H_{1,1},\sigma\circ\tau\circ\tau\circ\tau)$.}
    \label{fig:punctured torus}
\end{figure}

\subsection{Equivalence of Kirby diagrams}\label{braids}
When going from the middle to the right in Figure~\ref{fig:punctured torus}, one could have also obtained any \Kd shown in Figure~\ref{fig:family of kd}. The fact that these \Kds represent diffeomorphic manifolds will be useful when it comes to proving Corollaries~\ref{spun lens space cor 1} and~\ref{spun lens space cor}.
\begin{figure}[ht]
    \centering
    \includegraphics[scale=0.27]{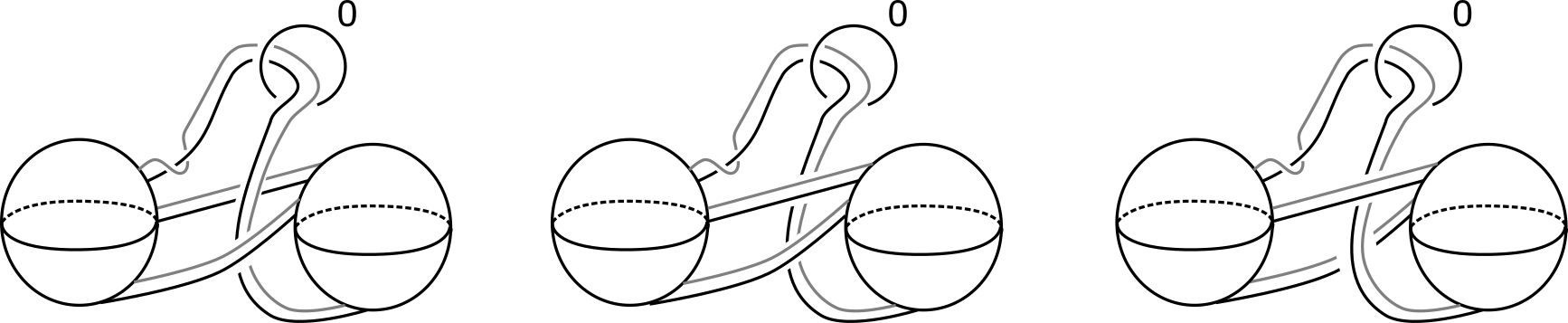}
    \caption{Some other \Kds of $\operatorname{Ob}(H_{1,1},\sigma\circ\tau\circ\tau\circ\tau)$.}
    \label{fig:family of kd}
\end{figure}

It suffices to show the family of diagrams in Figure~\ref{fig:family of kd'} are equivalent.
\begin{figure}[ht]
    \centering
    \includegraphics[scale=0.27]{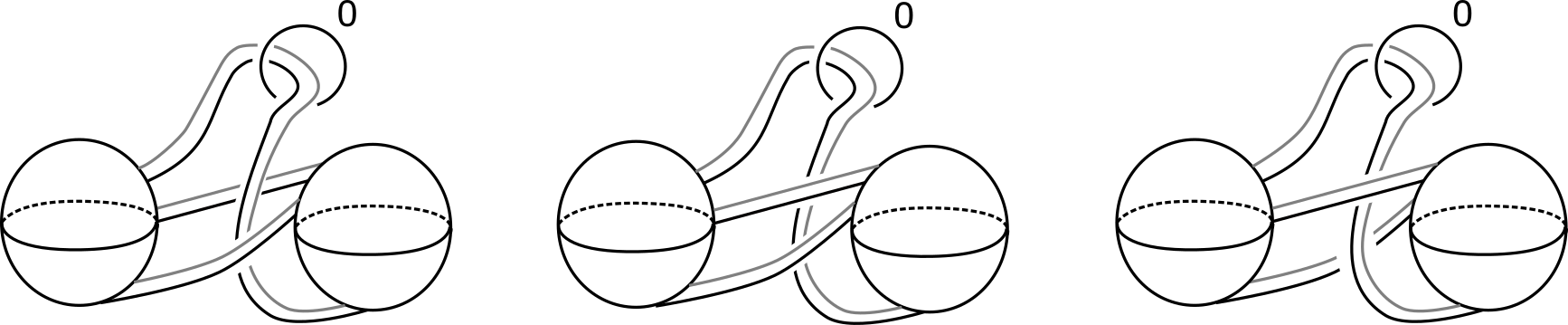}
    \caption{}
    \label{fig:family of kd'}
\end{figure}
\begin{definition}
    A \Kd is said to \textit{come from a $p$-braid} if it is a \Kd obtained from a $p$-stranded braid, or $p$-braid for short, by \begin{itemize}
    \item embedding the braid in the thickened $yz$-plane in $\mathbb{R}^3$,
    \item closing it off by attaching a pair of identified $D^3$'s,
    \item adding blackboard framing, and
    \item adding a $0$-framed meridian.
\end{itemize}
\end{definition}

\begin{figure}[ht]
        
            \centering
            
            \includegraphics[scale=0.3]{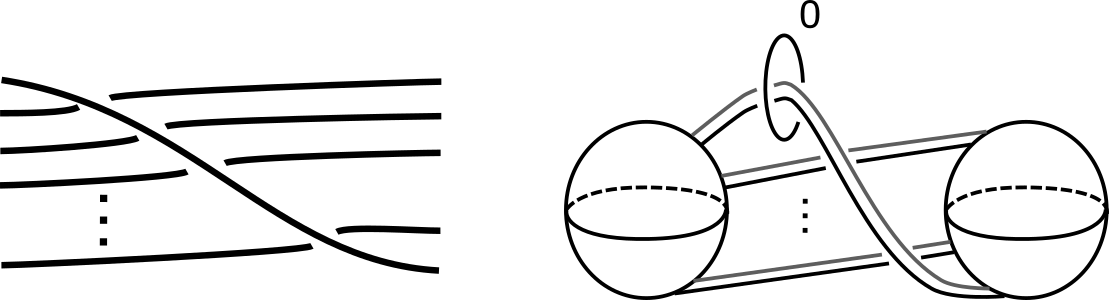}
        
        \caption{A \Kd coming from a braid whose closure is a knot.}
        \label{fig:braid}
    \end{figure}

We are interested in \Kds that come from a $p$-braid whose closure is a knot. Lemma~\ref{equivalence of kd} implies that the \Kds in Figure~\ref{fig:family of kd'} represent diffeomorphic $4$-manifolds.

\begin{lemma}\label{equivalence of kd}
    All \Kds that come from a $p$-braid whose closure is a knot represent diffeomorphic $4$-manifolds.
\end{lemma}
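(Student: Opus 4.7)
The plan is to reduce any Kirby diagram coming from a $p$-braid with knot closure, via Kirby moves, to a canonical form depending only on the strand count $p$. Let $K$ denote the closure of the braid $\beta$, framed by its writhe $w$, and let $\mu$ denote the prescribed $0$-framed meridian of $K$ from the construction.

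I would begin with two preliminary observations. First, the homology class $[K] = p \in H_1(D^4 \cup h^1) \cong \mathbb{Z}$ is preserved by handle slides of $K$ over $\mu$, since $\mu$ is a meridian of $K$ and hence nullhomologous. Second, the writhe $w$ (equivalently, the framing of $K$) satisfies $w \equiv p-1 \pmod{2}$: the closure of $\beta$ is a knot precisely when the induced permutation on strands is a single $p$-cycle, which forces the crossing count of $\beta$ (and hence its writhe) to have the indicated parity.

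The central step is to use handle slides of $K$ over $\mu$ to reduce $K$ to a canonical unknotted form. Reordering the $2$-handles so that $\mu$ is attached before $K$, the intermediate boundary becomes $(S^1 \times S^2) \# (S^1 \times S^2)$, with the second summand arising from $0$-surgery on $\mu$ and containing an essential $2$-sphere $S$. Sliding strands of $K$ across $S$, which is equivalent to handle slides of $K$ over $\mu$, allows me to implement arbitrary crossing changes on $K$ while preserving $[K] = p$ and altering the framing of $K$ only by even integers. By the classical fact that every knot can be unknotted via finitely many crossing changes, I can reduce $K$ to the closure of the standard positive braid $\sigma_1 \sigma_2 \cdots \sigma_{p-1} \in B_p$, which is an unknot passing through the $1$-handle $p$ times with writhe $p-1$. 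Additional slides over $\mu$, each adjusting the framing by $\pm 2$, then normalize the framing to exactly $p-1$, consistent with the parity constraint. The resulting diagram depends only on $p$, so any two Kirby diagrams from $p$-braids with knot closure represent diffeomorphic $4$-manifolds.

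The main technical obstacle is verifying that slides over $\mu$ realize arbitrary crossing changes of $K$. The argument requires isotoping $\mu$ so that it sits as a small meridian near a chosen crossing, choosing a band for the slide so that the band sum $K \natural \mu$ differs from $K$ only at that crossing (up to Reidemeister moves), and carefully tracking the framing bookkeeping. While this is a standard technique in Kirby calculus, the consistent reconciliation of the framing change from each slide (by $\pm 2$) with the writhe change from the corresponding crossing change (also by $\pm 2$) is the delicate part of the argument.
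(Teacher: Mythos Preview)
Your approach and the paper's share the same core move: realizing crossing changes on $K$ by handle slides over the $0$-framed meridian $\mu$. The paper organizes this a bit differently---it first shows that $p$-braids inducing the \emph{same} permutation give diffeomorphic manifolds (via the crossing-change slides), then shows that \emph{conjugate} permutations give diffeomorphic manifolds (via an ambient isotopy sliding two intersection points of $K$ around $\partial D^3$), and finally notes that all $p$-cycles are conjugate by transpositions.

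There is one real gap in your reduction step. You invoke the classical fact that every knot can be unknotted by crossing changes in order to reduce $K$ to the closure of $\sigma_1\cdots\sigma_{p-1}$. But that fact is about knots in $S^3$; here $K$ is a braid closure in $\partial(D^4\cup h^1)\cong S^1\times S^2$, passing through the $1$-handle $p$ times. Crossing changes in the diagram---equivalently, your slides over $\mu$, which sits away from the $1$-handle---preserve the permutation the braid induces on the $p$ strands. So slides over $\mu$ alone can take a braid with $p$-cycle $\pi$ only to another braid with the same $\pi$; they cannot reach the standard cycle $(1\,2\cdots p)$ from an arbitrary one. What is missing is an ambient isotopy in $S^1\times S^2$ that changes the permutation, and this is exactly the paper's second move: sliding two adjacent intersection points of $K$ with $\partial D^3$ past one another conjugates $\pi$ by a transposition. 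Once that isotopy is made explicit, your argument is complete.

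A minor stylistic point: the paper observes that the crossing-change slide, done with the appropriate band, keeps the framing equal to the blackboard framing of the new diagram throughout, so no separate normalization is needed. Your parity computation and end-stage framing correction via extra slides over $\mu$ is correct but slightly more roundabout.
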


\begin{proof}
A $p$-braid induces a permutation on $p$ letters. First claim: \textit{$p$-braids that induce the same permutation on $p$ letters give rise to \Kds representing diffeomorphic $4$-manifolds.} If two non-equivalent $p$-braids induce the same permutation on $p$ letters, then they are related by crossing changes. By isotopy, move the $0$-framed meridian close to a crossing, then slide the \handle{2} over it to change the crossing as shown in Figure~\ref{fig:handle slide}. Note that the framing of both \handles{2} remains unchanged. The handle slide is a \Kd invariant move, hence proving the first claim. Consequently, a permutation on $p$ letters defines a Kirby diagram.

\begin{figure}[ht]
    \centering
    \includegraphics[scale=0.4]{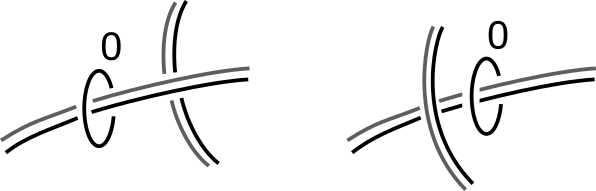}
    \caption{A handle slide changes the crossing.}
    \label{fig:handle slide}
\end{figure}

Second claim: \textit{permutations up to conjugation by a cycle of length two give rise to \Kds representing diffeomorphic $4$-manifolds.} Isotope the \handle{2} that intersects with the attaching region $D^3$ along $\partial D^3$. We can switch the places of two intersections without changing the framing of the \handle{2} as shown in Figure~\ref{fig:handle perturbation}. This \Kd isotopy corresponds to adding a conjugate pair of braids to each end of a braid as shown in Figure~\ref{fig:braid twist}, or equivalently, to conjugating the induced permutation by a cycle of length two. This proves the second claim.

\begin{figure}[ht]
\begin{subfigure}{0.55\textwidth}
    \centering
    \includegraphics[scale=0.28]{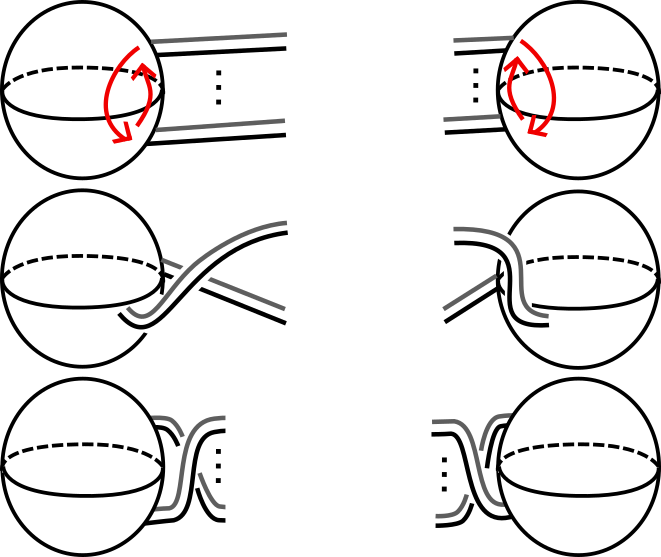}
    \caption{Isotope the \handle{2} along the boundary of the $D^3$ switching the places of two intersections.}
    \label{fig:handle perturbation}
\end{subfigure}\hfill
\begin{subfigure}{0.35\textwidth}
    \centering
    \includegraphics[scale=0.33]{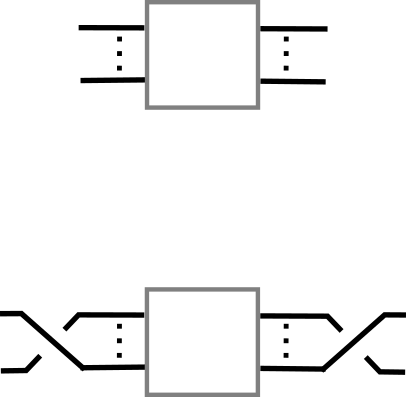}
    \caption{This corresponds to adding a conjugate pair of braids.}
    \label{fig:braid twist}
\end{subfigure}
\caption{}
\end{figure}

On one hand, if two $p$-braids induce the same permutation, then we are done by the first claim. On the other hand, if two $p$-braids induce different permutations, then $p>1$. A $p$-braid whose closure is a knot induces a permutation with no fixed points that can be represented by a cycle $(a_1a_2\dots a_p)$ of length $p$. According to the following computation, for $i\neq j$,
$$(a_ia_j)(a_1\dots a_i\dots a_j\dots a_p)(a_ja_i)=(a_1\dots a_j\dots a_i\dots a_p),$$
permutations on $p$ letters with no fixed points are all equivalent up to conjugations by a cycle of length two. The proof is complete by the second claim.
\end{proof}

\section{Applications}\label{application}
One can perturb a \Kd of a $4$-dimensional open book with trivial monodromy in such a way that one recognizes a different open book decomposition of the same $4$-manifold as demonstrated in Figure~\ref{fig:not unique}. For example, it follows that $\operatorname{Ob}(\operatorname{L}(3,1)-D^3,\operatorname{id})$ is diffeomorphic to $\operatorname{Ob}(H_{1,1},\sigma\circ\tau\circ\tau\circ\tau)$. With this observation and Proposition~\ref{monodromy on punctured handlebodies}, we prove that any open book constructed with trivial monodromy admits an open book decomposition with a punctured handlebody as page. Corollary~\ref{spun lens space cor} shows that the spin of a lens space $\operatorname{L}(p,q)$ only depends on $p$~\cite{Meier,Pao,Plotnick}.

\begin{figure}[ht]
    \centering
    \includegraphics[scale=0.265]{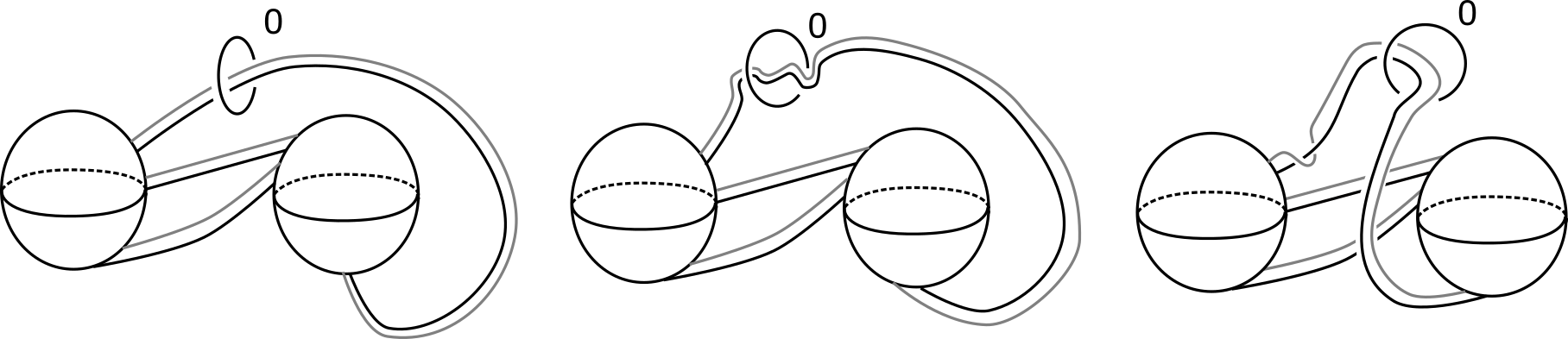}
    \caption{Left to right: Perturb the \Kd by rotating the $0$-framed meridian from a position transverse to the paper to lay down on the paper. We have encountered the \Kd on the left in Figure~\ref{fig:lens space}, it is a \Kd of $\operatorname{Ob}(\operatorname{L}(3,1)-D^3,\operatorname{id})$. The \Kd on the right represents $\operatorname{Ob}(H_{1,1},\sigma\circ\tau\circ\tau\circ\tau)$ as seen in Figure~\ref{fig:punctured torus}.}
    \label{fig:not unique}
\end{figure}

The following proposition delivers the existence of a desired monodromy for Theorem~\ref{not unique}.
\begin{proposition}\label{monodromy on punctured handlebodies}
     Given $g,n> 0$. There exists a monodromy on $H_{g,n}$ that sends the cocore of the $j$-th \handle{2} to an arc in $H_{g,n}$ running through the \handles{1} $$h_{j_1},\dots,h_{j_{k(j)}},$$ where $\{j_1,\dots,j_{k(j)}\}\subset\{0,1,\dots,g\}$, for all $j=1,\dots,n$.
\end{proposition}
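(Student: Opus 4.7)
The plan is to realize $\varphi$ as a composition of torus twists, assembled independently for each 2-handle. For each $j\in\{1,\dots,n\}$ I would set
\[
    \varphi_j \;=\; \tau^j_{j_1}\circ\tau^j_{j_2}\circ\cdots\circ\tau^j_{j_{k(j)}},
\]
and take $\varphi = \varphi_1\circ\cdots\circ\varphi_n$.

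First I would establish a non-interference claim: for $j'\neq j$ and any index $l$, the twist $\tau^j_l$ acts as the identity on the cocore of $h^2_{j'}$. Choosing the torus $T^j_l$ as a sufficiently thin surface parallel to $\partial(D^3\cup h^1_l\cup h^2_j)$, a supporting collar of $T^j_l$ can be arranged to be disjoint from $h^2_{j'}$ and hence from the cocore of $h^2_{j'}$, which sits inside $h^2_{j'}$. Since $\tau^j_l$ is the identity outside this collar, the cocore of $h^2_{j'}$ is fixed. Consequently each factor $\varphi_{j'}$ with $j'\neq j$ fixes the cocore of $h^2_j$, so the image of the cocore of $h^2_j$ under $\varphi$ coincides with its image under $\varphi_j$.

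Next I would prove by induction on $k=k(j)$ that $\varphi_j$ sends the cocore of $h^2_j$ to an arc running through $h^1_{j_1},\dots,h^1_{j_k}$. The base case $k=1$ is Definition~\ref{tt}. For the inductive step, let $\gamma$ be the image of the cocore after applying $\tau^j_{j_2}\circ\cdots\circ\tau^j_{j_k}$; by hypothesis $\gamma$ traverses $h^1_{j_2},\dots,h^1_{j_k}$ with one endpoint on the puncture sphere of $h^2_j$. Viewing $\tau^j_{j_1}$ as a point-push of that puncture sphere once around $h^1_{j_1}$ and back, and taking $T^j_{j_1}$ narrow enough that its collar meets $\gamma$ only near its puncture-sphere endpoint, applying $\tau^j_{j_1}$ should drag this endpoint once around $h^1_{j_1}$ while leaving the rest of $\gamma$ unchanged. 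The new arc then traverses $h^1_{j_1}$ followed by $h^1_{j_2},\dots,h^1_{j_k}$.

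The main obstacle I expect is making the inductive step rigorous — specifically, arguing that $\tau^j_{j_1}$ contributes exactly one new traversal of $h^1_{j_1}$ to $\gamma$ without disturbing the existing ones. The key technical input will be the careful choice of $T^j_{j_1}$ as a thin parallel copy of $\partial(D^3\cup h^1_{j_1}\cup h^2_j)$, so that under the point-push interpretation the twist affects only a neighborhood of the puncture sphere of $h^2_j$, and its effect on $\gamma$ amounts to concatenating a single loop around $h^1_{j_1}$ at the puncture-sphere endpoint.
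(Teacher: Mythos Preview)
Your construction is exactly the paper's: the proof there is a single sentence writing $\varphi=\mathcal{T}_n\circ\cdots\circ\mathcal{T}_1$ with $\mathcal{T}_i=\tau^i_{i_{k(i)}}\circ\cdots\circ\tau^i_{i_1}$ and appealing to Definition~\ref{tt}. Your ordering of the factors is reversed, but that is immaterial for the statement as phrased.

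The extra justification you supply, however, has a genuine gap in the non-interference step. From the fact that each $\varphi_{j'}$ with $j'\neq j$ fixes the \emph{cocore} $c_j$ you cannot conclude that $\varphi(c_j)=\varphi_j(c_j)$. In $\varphi=\varphi_1\circ\cdots\circ\varphi_n$, the factors $\varphi_{j-1},\dots,\varphi_1$ are applied to $\gamma_j:=\varphi_j(c_j)$, which is no longer the short cocore sitting inside $h^2_j$ but an arc that runs through $D^3$ and several $1$-handles. Your argument (that a collar of $T^{j'}_l$ can be taken disjoint from $h^2_j$) says nothing about whether that collar meets $\gamma_j$; indeed $T^{j'}_l$ is parallel to $\partial(D^3\cup h^1_l\cup h^2_{j'})$, so its collar sweeps through $D^3$ and $h^1_l$, exactly where $\gamma_j$ may live. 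If $\gamma_j$ crosses such a collar, the twist $\tau^{j'}_l$ can drag those crossings around $h^1_l$ and alter the word recorded by $\gamma_j$.

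To close the gap you should argue that the supports of the $\varphi_j$ can be chosen pairwise disjoint, so that the blocks commute and the inference becomes valid. This is plausible---for each $j$ one can realise the twists in $\varphi_j$ inside a thin tubular neighbourhood of the $j$-th puncture together with arcs threading the prescribed $1$-handles, and for distinct $j$ these neighbourhoods can be taken disjoint---but it needs to be said; disjointness from the $2$-handle $h^2_j$ alone is not enough.
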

\begin{proof}
    It follows by Definition~\ref{tt} that $$\varphi=\mathcal{T}_n\circ\dots\circ\mathcal{T}_2\circ\mathcal{T}_1,\text{ where }\mathcal{T}_i=\tau^i_{i_{k(i)}}\circ\dots\circ\tau^i_{i_1}$$
    for all $i=1,\dots,n$, is a monodromy on $H_{g,n}$ that sends the cocore of the $j$-th \handle{2} to an arc in $H_{g,n}$ running through the \handles{1} $$h_{j_1},\dots,h_{j_{k(j)}}$$ for all $j=1,\dots,n$.
\end{proof}

\begin{theorem}\label{not unique}
    Given any compact oriented $3$-manifold $M$ with non-empty boundary, there exists a monodromy $\varphi$ on a punctured handlebody $H$ such that $\operatorname{Ob}(H,\varphi)$ is diffeomorphic to $\operatorname{Ob}(M,\operatorname{id})$. Furthermore, suppose $M$ admits a handle decomposition with a single \handle{0}, $g$ \handles{1} and $n$ \handles{2}, then $H=H_{g,n}$ and $\varphi$ is given by a composition of sphere twists and torus twists. Namely, a torus twist $\tau^j_l$ each time the attaching sphere of the $j$-th \handle{2} runs through the $l$-th \handle{1} in $M$.
\end{theorem}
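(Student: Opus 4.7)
The plan is to show that applying Algorithm~\ref{algorithm2} to a Heegaard diagram of $M$ and applying Algorithm~\ref{algorithm3} to the canonical Heegaard diagram of $H_{g,n}$ with a carefully chosen monodromy yield Kirby diagrams that differ only by an ambient isotopy (together with a finite number of handle slides, and sphere twists that adjust framings). Fix a handle decomposition of $M$ with one \handle{0}, $g$ \handles{1} and $n$ \handles{2} and place its \Hd in the $yz$-plane. By Algorithm~\ref{algorithm2}, the resulting \Kd $K_1$ of $\operatorname{Ob}(M,\operatorname{id})$ consists of $g$ pairs of $D^3$'s, the blackboard-framed attaching spheres of the $n$ \handles{2} of $M$ (which thread through those $D^3$'s in a prescribed braid pattern), and a $0$-framed meridian around each.

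For each $j=1,\dots,n$, read off the ordered sequence $h^1_{j_1},\dots,h^1_{j_{k(j)}}$ of \handles{1} that the attaching sphere of the $j$-th \handle{2} of $M$ traverses. Define $\mathcal{T}_j:=\tau^j_{j_{k(j)}}\circ\cdots\circ\tau^j_{j_1}$ and $\varphi:=\mathcal{T}_n\circ\cdots\circ\mathcal{T}_1$ as in Proposition~\ref{monodromy on punctured handlebodies}; then $\varphi$ sends the cocore of the $j$-th \handle{2} of $H_{g,n}$ to an arc through the same ordered sequence of \handles{1}. Feeding the canonical \Hd of $H_{g,n}$ and $\varphi$ into Algorithm~\ref{algorithm3} produces a \Kd $K_2$ of $\operatorname{Ob}(H_{g,n},\varphi)$ whose induced \handle{2} attaching spheres consist, by Proposition~\ref{cocore is half-meridian}, of a back half-meridian glued to a front arc running through the $D^3$ pairs in exactly the braid pattern of the original attaching sphere of $h^2_j$.

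The key step is to relate $K_1$ and $K_2$ by the local move depicted in Figure~\ref{fig:not unique}: rotate each $0$-framed meridian of $K_1$ from its transverse position into the $yz$-plane. The back half becomes precisely the back half-meridian appearing in $K_2$, while the front half is dragged around the braid trace of the \handle{2} attaching sphere in $M$, reproducing the image under $\varphi$ of the front half-meridian in $K_2$. Any leftover ambiguity in braid ordering is absorbed by Lemma~\ref{equivalence of kd}, so the underlying framed links in $K_1$ and $K_2$ agree up to Kirby moves, establishing the diffeomorphism $\operatorname{Ob}(H_{g,n},\varphi)\cong\operatorname{Ob}(M,\operatorname{id})$.

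The main obstacle is the framing bookkeeping. The rotation above does not a priori preserve blackboard framing, and the framings of the two induced \handles{2} in $K_1$ and $K_2$ must be reconciled. Since framings of a $4$-dimensional \handle{2} form a torsor over $\pi_1(O(2))\cong\mathbb{Z}$, and since, by Definition~\ref{st}, a sphere twist $\sigma^j$ changes the framing of the cocore of $h^2_j$ by exactly one without moving the arc, I would count the framing discrepancy produced by the rotation for each $j$ and insert the corresponding number of sphere twists into $\mathcal{T}_j$. This yields $\varphi$ as the claimed composition of torus and sphere twists and completes the identification of the two \Kds.
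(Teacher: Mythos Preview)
Your strategy is the paper's: build $K_1$ via Algorithm~\ref{algorithm2}, rotate each $0$-framed meridian into the $yz$-plane (Figure~\ref{fig:not unique}), and reinterpret the result as the output of Algorithm~\ref{algorithm3} for $H_{g,n}$ with a monodromy supplied by Proposition~\ref{monodromy on punctured handlebodies}, correcting framings with sphere twists at the end.

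Two points deserve tightening. First, your description of the rotation has the roles reversed. After the meridian $\beta_j$ is laid into the $yz$-plane it becomes the blackboard-framed \handle{2} attaching circle of the \emph{new} page $H_{g,n}$; it is the \emph{original} attaching circle $\alpha_j$ (now forced to pop off the plane in two small arcs near $\beta_j$) that plays the role of the induced \handle{2} in $K_2$. Its small arc behind the plane is the back half-meridian about $\beta_j$, and the remainder of $\alpha_j$---which still threads the $D^3$'s in the pattern dictated by the Heegaard diagram of $M$---is the front-cover image of the cocore under $\varphi$. Your sentence reads as if the meridian itself is ``dragged around the braid trace,'' which is not what happens.

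Second, Lemma~\ref{equivalence of kd} is stated only for diagrams coming from a $p$-braid with a single pair of $D^3$'s and a single $0$-framed meridian (the $H_{1,1}$ situation); it does not apply to general $H_{g,n}$, and the paper does not invoke it here. No ``braid ordering'' repair is needed: the rotation is an ambient isotopy of $K_1$, and once you observe that an arc in the front cover $H_{g,n}$ is determined (up to isotopy rel endpoints) by the ordered list of \handles{1} it traverses, the image of the cocore under the torus-twist monodromy of Proposition~\ref{monodromy on punctured handlebodies} matches $\alpha_j$ directly. Drop the appeal to Lemma~\ref{equivalence of kd} and the argument goes through exactly as in the paper.
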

\begin{proof}
    Suppose $M$ is not a punctured handlebody, otherwise, just take $H=M$ and the theorem is trivial. Given a \Hd of $M$ that came from a standard handle decomposition without \handles{3}, with $n$ \handles{2} (note that $n>0$, since $n=0$ would imply $M$ is a handlebody) and $g$ \handles{1}, Algorithm~\ref{algorithm1} produces a \Kd of the half open book with page $M$ that consists of $n$ blackboard-framed \handle{2} attaching spheres $\alpha_1,\dots,\alpha_n$ and $g$ pairs of $D^3$'s. According to Algorithm~\ref{algorithm2}, adding a $0$-framed meridian $\beta_i$ to each $\alpha_i$ gives a \Kd of the open book $\operatorname{Ob}(M,\operatorname{id})$. Note that $\alpha_i$ lies in the $yz$-plane and $\beta_i$ is transverse to the $yz$-plane for all $i=1,\dots,n$. We will perturb this \Kd of $\operatorname{Ob}(M,\operatorname{id})$ to a \Kd of $\operatorname{Ob}(H_{g,n},\varphi)$, for some monodromy $\varphi\colon H_{g,n}\rightarrow H_{g,n}$ given by a composition of twists along spheres and tori embedded in $H_{g,n}$.
\begin{figure}[ht]
        \begin{subfigure}{0.45\textwidth}
            \centering
            \includegraphics[scale=0.45]{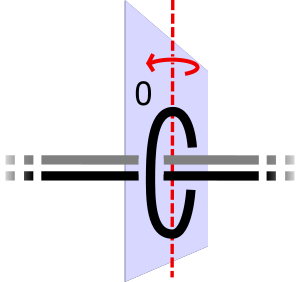}
            \caption{Part of a framed \handle{2} attaching sphere $\alpha_i$ and its $0$-framed meridian $\beta_i$. $\alpha_i$ is in the $yz$-plane, while $\beta_i$ lies on a plane (blue) transverse to the $yz$-plane.}
        \end{subfigure}
        \hfill
        \begin{subfigure}{0.45\textwidth}
            \centering
            \includegraphics[scale=0.4]{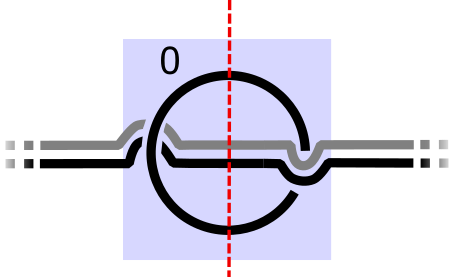}
            \caption{Rotate about the dashed line (red) so that the $0$-framed meridian lies on the $yz$-plane. This forces two small intervals of $\alpha_i$ to leave the $yz$-plane.}
            \label{fig:immersion}
        \end{subfigure}
        \caption{}
        \label{fig:perturb kd}
    \end{figure}
    
    Perturb this \Kd of $\operatorname{Ob}(M,\operatorname{id})$ so that each meridian $\beta_i$ becomes an unknot in the $yz$-plane. As a result, two small intervals of the \handle{2} attaching sphere $\alpha_i$, $i=1,\dots,n$, are forced to leave the $yz$-plane as shown in Figure~\ref{fig:perturb kd}. The \textit{perturbed \Kd} of $\operatorname{Ob}(M,\operatorname{id})$ still represents the same $4$-manifold. We would like to consider the perturbed \Kd as the union of $\alpha_i$, $i=1,\dots,n$, and a \Kd of some half open book $X$. We remove all the $\alpha_i$'s from the perturbed \Kd and refer to the resulting \Kd as $X$. To see the page of the half open book $X$ we perform Algorithm~\ref{algorithm1} in a reversed manner: remove the framing coefficients of the \handle{2} attaching spheres in $X$ and replace all $D^3$'s with $D^2$'s. This leaves us with $n$ unknots and $g$ pairs of $D^2$'s in the $yz$-plane, which is a \Hd of the punctured handlebody $H_{g,n}$. Hence the perturbed \Kd of $\operatorname{Ob}(M,\operatorname{id})$ is the union of the framed \handle{2} attaching spheres $\alpha_1,\dots,\alpha_n$ and a \Kd of the half open book with page $H_{g,n}$. It remains to show the existence of a monodromy $\varphi\colon H_{g,n}\rightarrow H_{g,n}$ such that the \Kd of $\operatorname{Ob}(H_{g,n},\operatorname{\varphi})$ is given by adding $\alpha_i$, $i=1,\dots,n$, to $X$.
    
    Let us see what properties the desired $\varphi$ must satisfy by analyzing our target, the perturbed Kirby diagram of $\operatorname{Ob}(M,\operatorname{id})$. \begin{itemize}
        \item The framed arc that is immersed behind the $yz$-plane would be the image of the attaching sphere in the back cover of the half open book with page $H_{g,n}$. (Recall Step~\ref{add half-meridian} of Algorithm~\ref{algorithm3}: add behind the $yz$-plane a half-meridian with blackboard framing.)
        \item The framed arc that popped out of the $yz$-plane together with the rest of that framed \handle{2} attaching sphere in the $yz$-plane would be the image of the attaching sphere in the front cover of the half open book. (Recall the part of a \Kd in front of the $yz$-plane is identified with the front cover of the half open book with page $H_{g,n}$.)
    \end{itemize}
    Suppose $\alpha_j$ runs through the \handles{1} $h_{j_1},h_{j_2},\dots,h_{j_{k(j)}}$, possibly with multiplicity, then the desired monodromy $\varphi$ must send the cocore of the $j$-th \handle{2} to an arc in $H_{g,n}$ running through the \handles{1} $h_{j_1},h_{j_2},\dots,h_{j_{k(j)}}$ for all $j=1,\dots,n$. There exists such a monodromy by Proposition~\ref{monodromy on punctured handlebodies}, namely $$\varphi=\mathcal{T}_n\circ\dots\circ\mathcal{T}_2\circ\mathcal{T}_1,\text{ where }\mathcal{T}_j=\tau^j_{h_{j_{k(j)}}}\circ\dots\circ\tau^j_{h_{j_1}}$$ for all $j=1,\dots,n$. (If $\alpha_l$ does not run through any $D^3$'s in the \Kd of $\operatorname{Ob}(M,\operatorname{id})$, then $\mathcal{T}_l$ is just the identity map. This cannot be the case for all $l\in\{1,\dots,n\}$ because $M$ is by assumption, not a punctured handlebody.) Compose $\varphi$ with a composition of sphere twists $\sigma^j$ to correct the framing of the image of $\alpha_j$, $j=1,\dots,n$, if necessary.
\end{proof}

\begin{corollary}\label{spun lens space cor 1}
    Every spun lens space is an open book with the punctured solid torus $H_{1,1}$ as page. More specifically, $\operatorname{Ob}(\operatorname{L}(p,q)-D^3,\operatorname{id})$ is diffeomorphic to $\operatorname{Ob}(H_{1,1},(\sigma)^q\circ(\tau)^p)$, where $(\sigma)^q\circ(\tau)^p$ denotes a composition of $p$ torus twists and $q$ sphere twists.
\end{corollary}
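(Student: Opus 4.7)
The strategy is to apply Theorem~\ref{not unique} directly to $M = \operatorname{L}(p,q) - D^3$. As recalled in Section~\ref{generalized lens space}, the lens space $\operatorname{L}(p,q)$ admits a handle decomposition with one $0$-handle, one $1$-handle, one $2$-handle, and one $3$-handle, obtained by gluing two solid tori along their boundary tori via the map sending a meridian to the slope $-p\lambda + q\mu$. Removing the $3$-handle yields the standard handle decomposition of $\operatorname{L}(p,q) - D^3$ with $g = n = 1$; its \Hd is shown in Figure~\ref{fig:hd of lens space}. By Theorem~\ref{not unique}, the punctured handlebody that serves as page is $H_{g,n} = H_{1,1}$, confirming the first assertion.

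Next I would identify the monodromy by inspecting the standard \Hd. The \handle{2} attaching curve represents the slope $-p\lambda + q\mu$ on the boundary torus of the solid torus spanned by the $0$-handle and the $1$-handle; in particular, it runs through the \handle{1} exactly $p$ times. By the recipe in Theorem~\ref{not unique}, each such passage contributes a torus twist $\tau^1_1 = \tau$, so the torus-twist part of the monodromy is $(\tau)^p$.

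The final step is to account for the sphere twists. The coefficient $q$ manifests itself in the blackboard framing of the attaching curve in the \Hd of $\operatorname{L}(p,q) - D^3$. When one performs the perturbation from the proof of Theorem~\ref{not unique} (Figure~\ref{fig:perturb kd})—rotating the $0$-framed meridian into the $yz$-plane and thereby expelling two small arcs of the attaching sphere—the resulting blackboard framing differs from that produced by $(\tau)^p$ alone by exactly $q$. To correct this discrepancy, one composes with $q$ sphere twists, yielding the monodromy $(\sigma)^q \circ (\tau)^p$, in agreement with the case $(p,q)=(3,1)$ depicted in Figure~\ref{fig:not unique}. Lemma~\ref{equivalence of kd} ensures that the various choices of how to thread the $p$ torus twists through the braid structure of the diagram all yield diffeomorphic $4$-manifolds, so the monodromy is well-defined up to the ambiguities the paper has already addressed.

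The main obstacle is the framing bookkeeping in the last step: one must carefully track how the blackboard framing of the image of the attaching sphere transforms under the perturbation of Figure~\ref{fig:perturb kd}, and verify that the net framing shift is exactly $q$ rather than some other linear function of $q$. Once this verification is in place, the rest follows mechanically from Theorem~\ref{not unique} and the analysis in Section~\ref{punctured torus}.
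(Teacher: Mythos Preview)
Your approach differs from the paper's and, as you yourself flag, the framing count is left unfinished. Theorem~\ref{not unique} tells you only that \emph{some} number of sphere twists corrects the framing; the specific claim that this number is exactly $q$ is the entire content of the corollary beyond what Theorem~\ref{not unique} already gives. Asserting that ``the resulting blackboard framing differs from that produced by $(\tau)^p$ alone by exactly $q$'' is not a proof---one would have to compute the writhe of the attaching curve of Figure~\ref{fig:hd of lens space} relative to the blackboard framing produced by Algorithm~\ref{algorithm3} for $(\tau)^p$, and this computation depends on choices (which $p$-braid represents $(\tau)^p$) that you have not pinned down. The appeal to the case $(p,q)=(3,1)$ is suggestive but does not establish the general pattern.

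The paper avoids this bookkeeping by arguing in the opposite direction. Rather than starting from the Kirby diagram of $\operatorname{Ob}(\operatorname{L}(p,q)-D^3,\operatorname{id})$ and reading off the monodromy, it starts from $\operatorname{Ob}(H_{1,1},(\sigma)^q\circ(\tau)^p)$: by Lemma~\ref{equivalence of kd} one may realize $(\tau)^p$ by \emph{any} $p$-braid whose closure is a knot, and the paper chooses a particular braid (the product of $q$ copies of a basic braid) engineered so that, after adding $q$ framing twists for the sphere twists, the resulting diagram is visibly isotopic---by pulling $q$ over-strands outward---to the standard Kirby diagram of the spun lens space in Figure~\ref{fig:generalized lens space}. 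This sidesteps any framing computation: the $q$ is baked into the choice of braid, and the isotopy is exhibited pictorially. Your route via Theorem~\ref{not unique} could in principle be completed, but you would need to carry out the framing calculation you defer, and that is precisely the work the paper's explicit construction replaces.
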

\begin{proof}

It follows Algorithm~\ref{algorithm3} that a \Kd coming from a $p$-braid, whose closure is a knot, is a \Kd of $\operatorname{Ob}(H_{1,1},(\tau)^p)$. By Lemma~\ref{equivalence of kd} we may consider the $p$-braid given in Figure~\ref{fig:part c}. One can check that the braid in Figure~\ref{fig:part c} is equivalent to the product of $q$ copies of the braid in Figure~\ref{fig:part a}.

\begin{figure}[ht]
\begin{subfigure}{0.3\textwidth}
    \centering
    \includegraphics[scale=0.33]{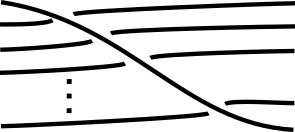}
    \caption{$q=1$}
    \label{fig:part a}

    \centering
    \includegraphics[scale=0.33]{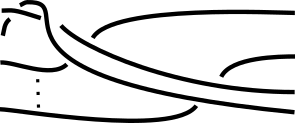}    \caption{$q=2$}

    \centering

\end{subfigure}
\hfill
\begin{subfigure}{0.65\textwidth}
    \centering
    \includegraphics[scale=0.5]{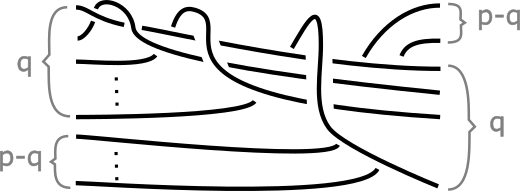}
\caption{}
\label{fig:part c}
\end{subfigure}
\caption{A family of $p$-braids whose closure are knots.}
    \label{fig:irreducible p-braid}
\end{figure}

To get a \Kd of $\operatorname{Ob}(H_{1,1},(\sigma)^q\circ(\tau)^p)$, we simply add $q$ twists to the parallel knot as shown in Figure~\ref{fig:exotic p-braid}.
\begin{figure}[ht]
    \centering
    \includegraphics[scale=0.3]{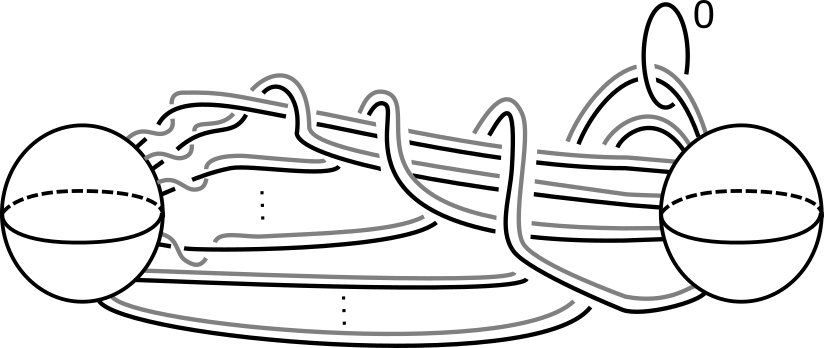}
    \caption{A \Kd of $\operatorname{Ob}(H_{1,1},(\sigma)^q\circ(\tau)^p)$.}
    \label{fig:exotic p-braid}
\end{figure}
The braid has been chosen so that the \Kd in Figure~\ref{fig:exotic p-braid} can be isotoped to a \Kd of the spun lens space $\operatorname{Ob}(\operatorname{L}(p,q)-D^3,\operatorname{id})$ as in Figure~\ref{fig:generalized lens space} by pulling the $q$ over-strands outwards. This shows that $\operatorname{Ob}(H_{1,1},(\sigma)^q\circ(\tau)^p)$ is diffeomorphic to $\operatorname{Ob}(\operatorname{L}(p,q)-D^3,\operatorname{id})$.

The case $(p,q)=(5,4)$ is demonstrated in Figure~\ref{fig:exotic p-braid example}.
\end{proof}

\begin{figure}[ht]
    \centering
    \includegraphics[scale=0.2]{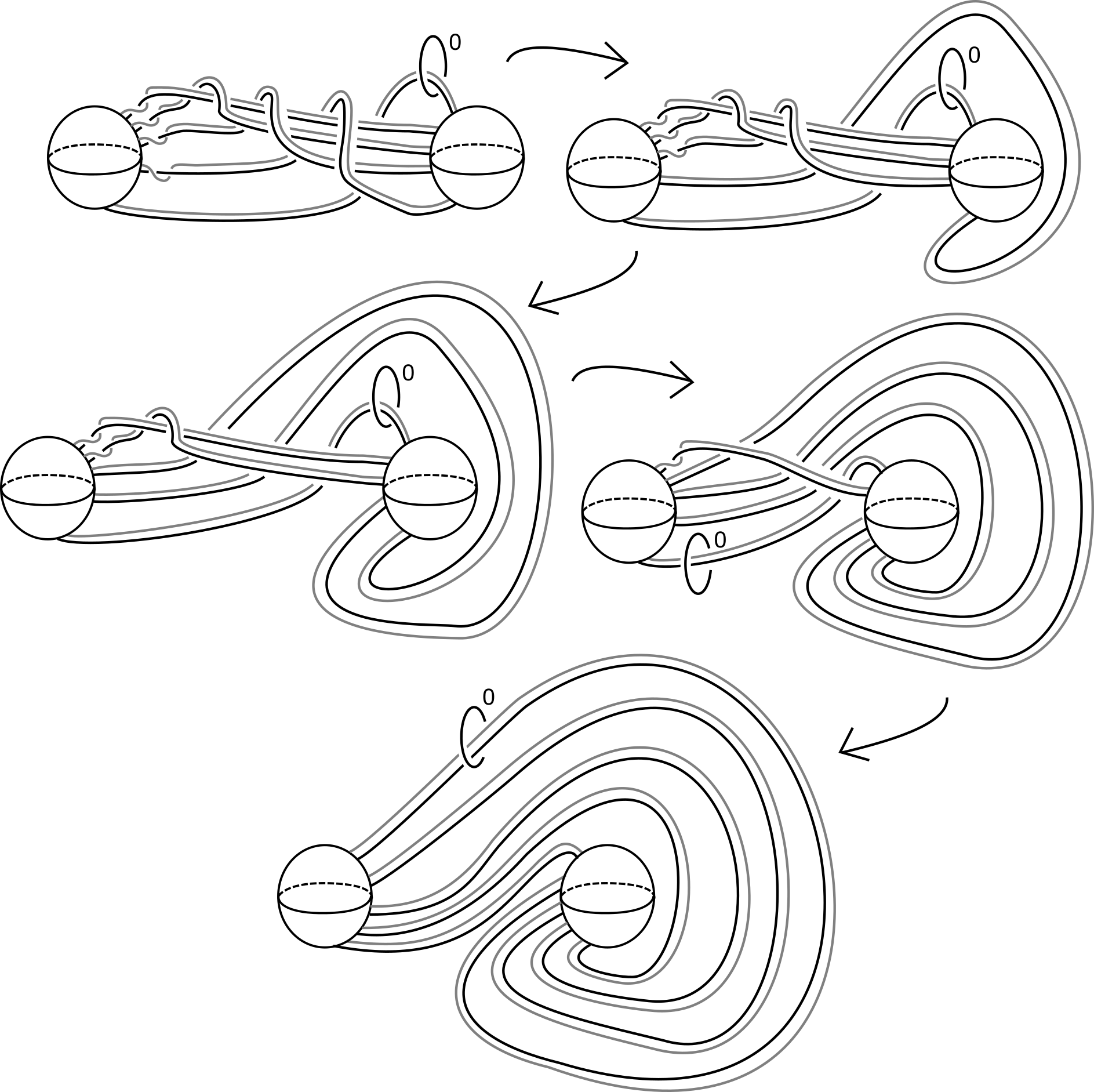}
    \caption{Showing that $\operatorname{Ob}(H_{1,1},(\sigma)^4\circ(\tau)^5)$ is diffeomorphic to $\operatorname{Ob}(\operatorname{L}(5,4)-D^3,\operatorname{id})$.}
    \label{fig:exotic p-braid example}
\end{figure}

Corollary~\ref{spun lens space cor} is a particular case of Pao's and Plotnick's work~\cite{Pao, Plotnick}, placed into context by Meier~\cite[Corollary~2.4]{Meier}. We provide a Kirby calculus proof.

\begin{corollary}\label{spun lens space cor}
    For all $1 \leq q < p$, the diffeomorphism type of a spun lens space $\operatorname{Ob}(\operatorname{L}(p,q)-D^3,\operatorname{id})$ is independent of $q$.
\end{corollary}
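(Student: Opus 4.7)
The plan is to observe that the Kirby diagram of $\operatorname{Ob}(L(p,q)-D^3,\operatorname{id})$ produced by Algorithm~\ref{algorithm2} is itself a Kirby diagram coming from a $p$-braid whose closure is a knot, so that Lemma~\ref{equivalence of kd} immediately forces its diffeomorphism type to depend only on $p$.

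More concretely, I start from the standard Heegaard diagram of $L(p,q)-D^3$ in Figure~\ref{fig:hd of lens space}: a pair of identified $D^2$'s representing the $1$-handle of the solid torus $V_{1}$, together with the attaching circle of the single remaining $2$-handle, which is the $(p,q)$-curve on $\partial V_{1}$. Embedded in the $yz$-plane, this circle passes through the pair of $D^2$'s a total of $p$ times, so its portion outside the $D^2$'s is the $yz$-plane image of a $p$-stranded braid in $\mathbb{R}^2$. The specific braid depends on $q$, but its induced permutation on the $p$ strands is a single $p$-cycle (equivalently, the closure is a knot) precisely when $\gcd(p,q)=1$, which is assumed throughout. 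Applying steps~(1), (2) and~(\ref{add 0 framed meridian}) of Algorithm~\ref{algorithm2} then produces exactly the data listed in the definition preceding Lemma~\ref{equivalence of kd}: a pair of $D^3$'s, the blackboard-framed braid closure, and a $0$-framed meridian of it, as in Figure~\ref{fig:generalized lens space}(b).

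Now I invoke Lemma~\ref{equivalence of kd}: all Kirby diagrams coming from a $p$-braid whose closure is a knot represent diffeomorphic $4$-manifolds. The integer $p$ is intrinsic to the lens-space parameter and does not vary with $q$, while the ``closure is a knot'' hypothesis holds uniformly for every admissible $q$. Hence the Kirby diagrams produced above, as $q$ ranges over the integers with $1\le q<p$ coprime to $p$, all represent the same closed $4$-manifold, which by Algorithm~\ref{algorithm2} and Proposition~\ref{prop} is $\operatorname{Ob}(L(p,q)-D^3,\operatorname{id})$. Independence of $q$ follows at once.

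The only substantive point requiring care is to verify that the standard picture of the $(p,q)$-curve on $\partial V_{1}$, once drawn as a Heegaard diagram in the $yz$-plane and passed through steps~(1)--(\ref{add 0 framed meridian}) of Algorithm~\ref{algorithm2}, genuinely fits the definition of ``coming from a $p$-braid''. This is essentially the classical fact that the $(p,q)$-torus knot is the closure of a $p$-stranded braid, and any projection compatible with Figure~\ref{fig:generalized lens space} provides an explicit such presentation; no new calculation beyond this identification is needed.
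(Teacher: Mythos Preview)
There is a genuine gap in the step where you assert that the Kirby diagram produced by Algorithm~\ref{algorithm2} is literally one ``coming from a $p$-braid''. In the Heegaard diagram of $\operatorname{L}(p,q)-D^{3}$ drawn in Figure~\ref{fig:hd of lens space}, the attaching circle is a planar curve that runs through the $1$-handle $p$ times \emph{and winds $q$ times around the pair of discs}; the paper's own proof refers to these as the ``$q$ strands to the right of the $D^{3}$ on the right-hand side''. Those $q$ outside strands prevent the arcs from forming a braid between the two $D^{3}$'s in the sense of the definition preceding Lemma~\ref{equivalence of kd}. Your appeal to the classical fact that the $(p,q)$-torus knot is a $p$-braid closure addresses only the knot type, not the framing: once you isotope the outside strands over a $D^{3}$ to reach braid position, the blackboard framing carried from the Heegaard picture differs from the blackboard framing of the braid picture by exactly $q$ twists (one per strand pulled over, as in Figure~\ref{fig:pulling b}). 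Since the definition of ``coming from a $p$-braid'' fixes the framing to be blackboard, the hypothesis of Lemma~\ref{equivalence of kd} is not met for $q\neq 0$.

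This framing discrepancy is precisely what the paper's proof is organized around. One can correct the framing using handle slides over the $0$-framed meridian (Figure~\ref{fig:handle slide type 2}), but each such slide removes two twists, so the argument only reduces directly to Lemma~\ref{equivalence of kd} when $q$ is even; the odd case is handled in parallel, and the two parities are then bridged via the Rolfsen twist $\operatorname{L}(p,2k)\cong\operatorname{L}(p,2k+p)$. Your write-up skips all of this, so as it stands it does not establish the corollary.
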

\begin{proof}
We first show that the diffeomorphism type of a spun lens space $\operatorname{Ob}(\operatorname{L}(p,q)-D^3,\operatorname{id})$ is independent of the parity of $q$, for $q<2p$.

We will show that $\operatorname{Ob}(\operatorname{L}(p,2k)-D^3,\operatorname{id})$, $k\in\mathbb{N}$, $p,2k$ coprime and $2k<2p$, are diffeomorphic to each other. Similarly, $\operatorname{Ob}(\operatorname{L}(p,2k+1)-D^3,\operatorname{id})$, $k\in\mathbb{N}$, $p, 2k+1$ coprime and $2k+1<2p$, are diffeomorphic to each other. The proof of the latter is essentially the same and hence omitted.
    
    When $p>2k$, consider the \Kd of $\operatorname{Ob}(\operatorname{L}(p,2k)-D^3,\operatorname{id})$ shown in Figure~\ref{fig:generalized lens space}. There are $2k$ strands to the right of the $D^3$ on the right-hand side, perform the isotopy shown in Figure~\ref{fig:pulling b} to each one of them.
    \begin{figure}[ht]
    \begin{subfigure}{0.45\textwidth}
        \centering
        \includegraphics[scale=0.25]{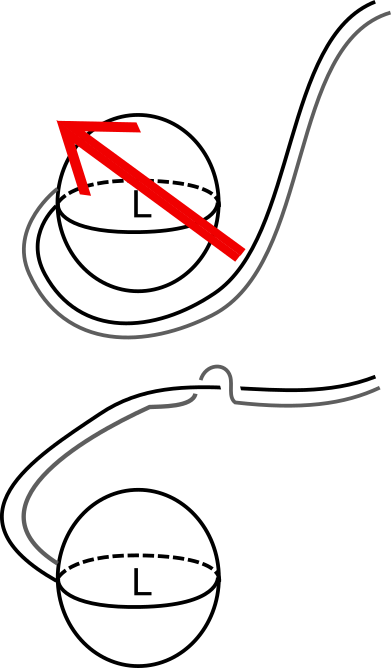}
        \caption{Pulling over the left $D^3$}
        \label{fig:pulling a}
    \end{subfigure}\hfill
    \begin{subfigure}{0.45\textwidth}
        \centering
        \includegraphics[scale=0.25]{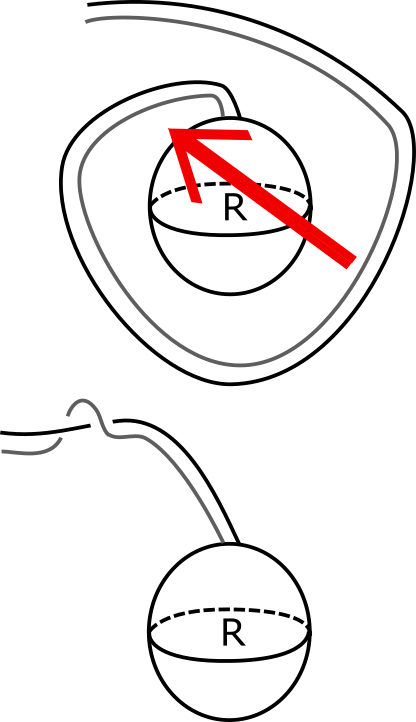}
        \caption{Pulling over the right $D^3$}
        \label{fig:pulling b}
    \end{subfigure}
    \caption{}
\end{figure} As a result, the parallel knot gains $2k$ twists in total. Slide the \handle{2} over the $0$-framed meridian $k$ times to undo these $2k$ twists restoring blackboard framing as shown in Figure~\ref{fig:handle slide type 2}. Hence we have shown that a \Kd of $\operatorname{Ob}(\operatorname{L}(p,2k)-D^3,\operatorname{id})$ and a \Kd that comes from a $p$-braid whose closure is a knot represent diffeomorphic $4$-manifolds for each integer $2k <p$ coprime with $p$.

\begin{figure}[ht]
        \centering
        \includegraphics[scale=0.44]{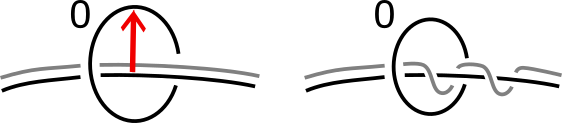}
        \caption{Slide the blackboard-framed \handle{2} over the $0$-framed meridian to change the framing by two.}
        \label{fig:handle slide type 2}
    \end{figure}
    
When $p<2k<2p$, construct a \Kd of $\operatorname{Ob}(\operatorname{L}(p,2k)-D^3,\operatorname{id})$ with Algorithm~\ref{algorithm2} using the \Hd in Figure~\ref{fig:lens hd extended} as input.
        \begin{figure}[ht]
        \centering
        \includegraphics[scale=0.27]{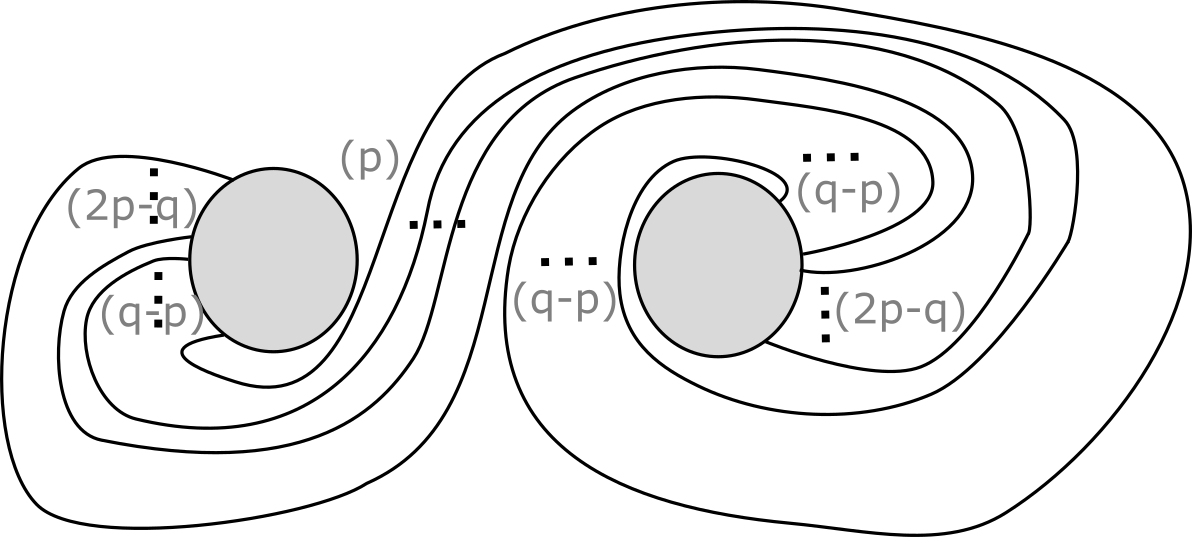}
        \caption{A \Hd of $\operatorname{L}(p,q)$, where $p<q<2p$ and $p,q$ coprime. The attaching sphere of the \handle{2} travels around the $D^2$ $q$ times and runs through the \handle{1} $p$ times.}
        \label{fig:lens hd extended}
    \end{figure}
    There are $2k$ strands between the centers of the $D^3$'s, we free this space as follows: counting from the left, pull the first $p$ strands (as in Figure~\ref{fig:pulling a}) over the left $D^3$ to the other side. There are also $2k$ strands to the right of the right $D^3$, counting from the right, pull the first $2k-p$ strands (as in Figure~\ref{fig:pulling b}) over the right $D^3$ to the other side.

    As a result, the parallel knot gains $2k$ twists in total. We slide the blackboard-framed \handle{2} over the $0$-framed meridian $k$ times to restore blackboard framing. Having cleaned up the space between the $D^3$'s, we rotate a $D^3$ (the other $D^3$ rotates synchronously) so that a $p$-braid is formed in between the pair of $D^3$'s. This shows that a \Kd of $\operatorname{Ob}(\operatorname{L}(p,2k)-D^3,\operatorname{id})$ and a \Kd that comes from a $p$-braid whose closure is a knot represent diffeomorphic $4$-manifolds for each integer $2k\in (p,2p)$ coprime with $p$.
    
    By Lemma~\ref{equivalence of kd}, \Kds that come from a $p$-braid whose closure is a knot represent diffeomorphic $4$-manifolds. Therefore $\operatorname{Ob}(\operatorname{L}(p,2k)-D^3,\operatorname{id})$, $k\in\mathbb{N}$, $p,2k$ coprime and $2k<2p$, are diffeomorphic to each other.
 \begin{figure}[ht]
        \centering
        \includegraphics[width=\textwidth]{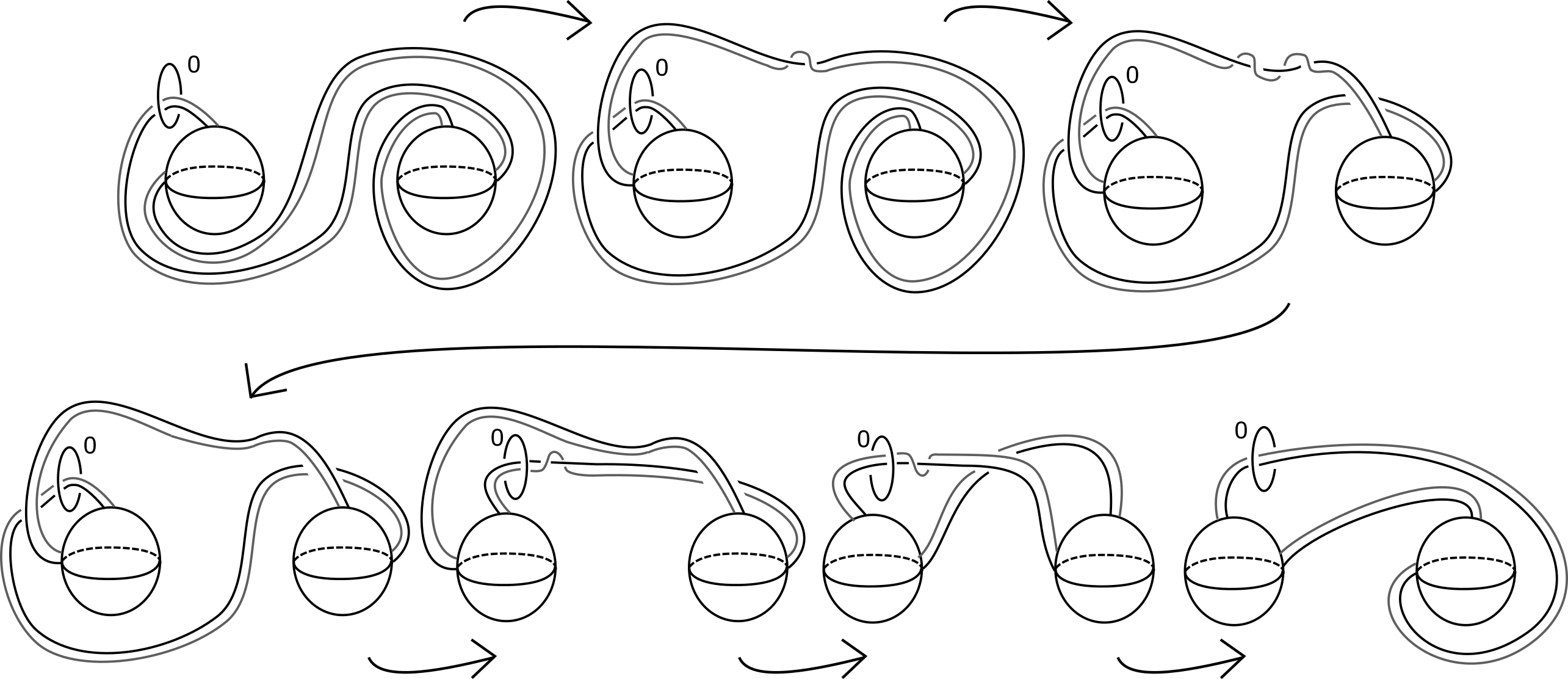}
        \caption{$\operatorname{Ob}(\operatorname{L}(2,3)-D^3,\operatorname{id})$ is diffeomorphic to $\operatorname{Ob}(\operatorname{L}(2,1)-D^3,\operatorname{id})$. To go from the first row to the second, slide the other \handle{2} over the $0$-framed meridian. In the middle of the second row, we rotate the pair of $D^3$.}
        \label{fig:lens extended example}
    \end{figure}
    
       Finally, by Rolfsen's twist $\operatorname{L}(p,q)$ is diffeomorphic to $\operatorname{L}(p,q+np)$ for all $n\in\mathbb{N}$~\cite{knots}, thus $\operatorname{L}(p,2k)$, $p>2k$, is diffeomorphic to $\operatorname{L}(p,2k+p)$. By assumption, $p$ and $2k$ are coprime, hence $2k+p$ is odd. Since we have already established that the diffeomorphism type is independent of the parity of $q$, for $q<2p$, this completes the proof.
 \end{proof}

\let\MRhref\undefined
\bibliographystyle{hamsalpha}  
\bibliography{Sources}  
\end{document}